\newlength{\dhatheight}
\newcommand{\doublehat}[1]{%
    \settoheight{\dhatheight}{\ensuremath{\widehat{#1}}}%
    \addtolength{\dhatheight}{-0.35ex}%
    \widehat{\vphantom{\rule{1pt}{\dhatheight}}%
    \smash{\widehat{#1}}}}
\def\hZ {{\textbf{Z}}}
\def\matZ{{\mathbb{Z}}}
\def\M{ {\textbf{M}}  }
\def\gt{{U_{\hbar}(\doublehat{\frak{gl}}_1)}}
\def\tb {{\cal{V}}} 
\def\tw {{\cal{W}}} 
\def\qmV {{\mathscr{V}}} 
\def\qm {\textsf{QM}} 
\def \vss {\widehat{{\cO}}_{\textrm{vir}}} 
\def \ev {{\textrm{ev}}} 
\def \fh {\frak{h}} 
\def \Rtot {{\mathscr{R}}} 
\def \Rwal {{\textsf{R}}} 
\def \gM {{\textbf{M}}_{{\cal{O}}(1)}} 
\def \gJ {{\textbf{J}}} 
\def \fock {{\cal{F}}} 
\def \lb {{\overrightarrow{\lambda}}} 
\def\dd {{\overrightarrow{d}}} 
\def\cv {{\hat{V}}}  
\def\fp {{\textbf{p}}} 
\def\be{\begin{eqnarray}}
\def\ee{\end{eqnarray}}
\newcommand{\C}{\mathbb{C}}
\newcommand{\Z}{\mathbb{Z}}
\newcommand{\R}{\mathbb{R}}
\newcommand{\Q}{\mathbb{Q}}
\newcommand{\cO}{\mathscr{O}}
\newcommand{\cM}{\mathscr{M}}
\newcommand{\bC}{\mathsf{C}}
\newcommand{\bA}{\mathsf{A}}
\newcommand{\bT}{\mathsf{T}}
\newcommand{\bG}{\mathsf{G}}
\newcommand{\fC}{\mathfrak{C}}
\DeclareMathOperator{\Hom}{\mathscr{H}\text{\kern -3pt {\calligra\large om}}\,}
\DeclareMathOperator{\Stab}{Stab}
\newtheorem{Corollary}{Corollary}
\newtheorem{Theorem}{Theorem}
\newtheorem{Lemma}{Lemma}
\newtheorem{Proposition}{Proposition}
\newtheorem{Conjecture}{Conjecture}
\theoremstyle{definition}
\begin{document}

\title{Rationality of capped descendent vertex in $K$-theory}
\author{Andrey  Smirnov}
\date{}
\maketitle

\begin{abstract}
In this paper we analyze the fundamental solution of the \textit{quantum difference equation} (qde) for the
moduli space of instantons on two-dimensional projective space.
The qde is a $K$-theoretic generalization of the quantum differential equation
in quantum cohomology.
As in the quantum cohomology case, the fundamental solution of qde provides the capping operator in $K$-theory  (the rubber part of the capped vertex). We study the dependence of the capping operator on the equivariant parameters $a_i$ of the torus acting on the instanton moduli space by changing the framing. We prove that the capping operator factorizes at $a_i\to 0$. The rationality of the $K$-theoretic 1-leg capped descendent vertex follows from factorization of the capping operator as a simple corollary.
\end{abstract}

\setcounter{tocdepth}{2}
\tableofcontents

\section{Introduction}
\subsection{Summary}
The rationality of partition functions in the presence of descendents is an important and long standing conjecture
in enumerative geometry of 3-folds. The deepest insight into this conjecture was achieved in the theory of stable pairs where the conjecture was proved for nonsingular toric 3-folds and local curve 3-folds \cite{PPRationality,PPToric} (the theory of stable pairs is conjecturally equivalent to Gromov-Witten theory and cohomological Donaldson-Thomas theory of 3-folds \cite{MOOP}). The central result here is the rationality of the cohomological capped 1-leg descendent vertex, see Theorem  3 in \cite{PPRationality}, which was obtained by detailed analysis of the poles of the descendent vertex. The rationality of the partition functions for local curve 3-folds and nonsingular toric 3-folds can be derived from this result by applying standard arguments such as degeneration and geometric reduction of 3-leg descendent vertex to the case of 1-leg.

In this paper we prove the rationality of the capped $K$-theoretic 1-leg descendent vertex - Theorem \ref{ratth}. This function is defined in the theory of stable quasimaps to the Hilbert scheme of points on the complex plane. The enumerative geometry of stable quasimaps  developed in \cite{pcmilect} is a $K$-theoretic generalization of quantum cohomology. We should stress here that our proof of rationality is completely different (and, we believe, simpler) than one in \cite{PPRationality}. Our main idea is to analyze the capped $K$-theory vertex in broader context: instead of restricting ourself to quasimaps to the Hilbert scheme of points $\textrm{Hilb}^{n}(\C^2)$ we consider the quasimaps to the moduli of instantons $\cM(n,r)$ of arbitrary rank $r$ and topological charge $n$. The Hilbert schemes are covered by a special case $r=1$ when $\cM(n,1)=\textrm{Hilb}^{n}(\C^2)$. For a general $r$ the coefficients of power series for both the bare vertex and the capping operator are rational functions of additional equivariant parameters $a_1\cdots a_r$ corresponding to the action of framing torus $\bA=(\C^{*})^r$ on $\cM(n,r)$.
We analyze the asymptotic behaviour of the capping operator and vertex in the limit $a_i\to 0$ and find that these functions factorize as stated in  Theorem \ref{facth} and Theorem \ref{facver}. In Section \ref{pfrat} we show that the rationality of the capped descendent vertex follows in an elementary way from these two factorization theorems.

The main ingredient of our approach is the \textit{quantum difference equation} (qde), see equation (\ref{qde1}). As explained in \cite{OS}, in $K$-theory this equation plays the role similar to one of the quantum differential equation in quantum cohomology
of instanton moduli space $\cM(n,r)$.
The capping operator in $K$-theory is given by the fundamental solution of qde.
We note here, that in the special case $r=1$, the cohomological limit of the operator $\gM(z)$ in qde (\ref{qde1}) coincides with the operator of quantum multiplication by the first Chern class of tautological bundle in the quantum cohomology of $\textrm{Hilb}^{n}(\C^2)$ given by formula (6) in \cite{OPQuanCoh}.
Therefore, in this limit the qde turns to the quantum differential equation for $\textrm{Hilb}^{n}(\C^2)$ described in \cite{OP}. As a consequence, rationality of the descendent vertex in the quantum cohomology follows from our main Theorem \ref{ratth} through the cohomological limit.

The paper is organized as follows. First we recall some necessary facts about $\cM(n,r)$ and various tori acting on this moduli space. Following \cite{pcmilect} we define the bare and capped vertex functions in Section \ref{versec}. We then formulate the factorization Theorems \ref{facth} and \ref{facver} and prove the main Theorem \ref{ratth} as a corollary.

In Section \ref{torsec} we outline the theory of the quantum toroidal algebra $\gt$.
Using the results of \cite{OS} we then describe the action of this algebra on the equivariant $K$-theory of the instanton moduli spaces.
In Section \ref{qdedefsec} we give a universal formula for qde for instanton moduli space of arbitrary rank $r$ in terms of Heisenberg subalgebras
of $\gt$. We then prove the factorization Theorem \ref{facth} for the capping operator.
In Section  \ref{vers} we give an explicit formula for  the bare 1-leg $K$-theoretic vertex. We use it to prove a factorization theorem for the vertex.

\subsection{Instanton moduli \label{instantsec}}
Let $\cM(n,r)$ be the moduli space of framed rank $r$ torsion-free sheaves ${\cal{F}}$ on $\mathbb{P}^2$ with fixed second Chern class $c_{2}({\cal{F}})=n$. A framing of a sheaf ${\cal{F}}$ is a choice of an isomorphism:
\be
\label{fram}
\phi: \left.{\cal{F}}\right|_{L_{\infty}} \to {\mathscr{O}}^{\oplus r }_{L_{\infty}}
\ee
where $L_{\infty}$ is the line at infinity of $\C^{2} \subset \mathbb{P}^2$. This moduli space is usually referred to as rank $r$ instanton moduli space. Note that in the special case $r=1$ this moduli space is isomorphic to the Hilbert scheme of $n$-points on the complex plane
$$
\cM(n,1)=\textrm{Hilb}^{n}(\C^2).
$$

Let $\bA \simeq (\C^{\times})^r$ be the framing torus acting on $\cM(n,r)$ by scaling the $i$-th summand in isomorphism (\ref{fram}) with a character which we denote by $a_i$. This torus acts on the instanton moduli space preserving the symplectic form. Let us denote by $\bT=\bA\times (\C^{\times})^2$ where the second factor acts on $\C^2 \subset \mathbb{P}^2 $ by scaling the coordinates on the plane with characters which we denote by $t_1$ and $t_2$. This induces an action of $\bT$ on~$\cM(n,r)$. The action of this torus scales the symplectic form with a character $\hbar=t_1 t_2$.

Let $\bC \subset \bA$ be a one-dimensional subtorus acting on the $r=r_1+r_2$-dimensional framing space
with the character $r_1+a r_2$. In this situation we say that subtorus $\bC$ splits the framing $r$ to $r_1+a r_2$.
We note that the components of $\bC$-fixed point set are of the form:
\be
\cM(n,r)^{\bC}=\coprod\limits_{n_1+n_2=n}\, \cM(n_1,r_1)\times \cM(n_2,r_2).
\ee
If we set $\cM(r)=\coprod\limits_{n=0}^{\infty}\, \cM(n,r)$ then
\be
\label{cfps}
\cM(r)^{\bC}=\cM(r_1)\times \cM(r_2).
\ee
\subsection{Vertex functions \label{versec}}
Let $\qm^{d}(n,r)$ be the moduli space of stable quasimaps from $\mathbb{P}^1$ to $\cM(n,r)$~\footnote{In this paper we follow the terminology and notations of \cite{pcmilect}. A good introduction to the stable quasimaps is Section 4.3 of \cite{pcmilect}.}. Let us consider an action of a one-dimensional torus on $\mathbb{P}$ which comes from scaling the standard coordinate on $\mathbb{P}$. The fixed point set of this action consist of two points $\{p_1,p_2\}=\{0,\infty \} \subset \mathbb{P}$.  We denote the character of $T_{p_1} \mathbb{P}$ by $q$ and the torus by $\C^{*}_{q}$. This action induces an action of $\C^{*}_{q}$ on $\qm^{d}(n,r)$. We denote the total torus acting on $\qm^{d}(n,r)$ by $\bG=\bT\times \C^{*}_{q}$.

For a point $p\in \mathbb{P}$ let $\qm^{d}_{p}(n,r)
\subset \qm^{d}(n,r)$ be the open subset of quasimaps non-singular at $p$. This subset
comes together with the evaluation map:
$$
\ev_p: \qm^{d}_{p}(n,r) \longrightarrow \cM(n,r)
$$
sending a quasimap to its value at $p$.

The moduli space of relative quasimaps $ \widehat{\qm^{d}_{p}}(n,r)$ is a resolution of the map $\ev$ meaning that we have a commutative diagram:
\[
\xymatrix{
 &  \widehat\qm^{d}_{p}(n,r) \ar[dr]^{\widehat{\ev}_p} &  \\
 {\qm^{d}_{p}}(n,r) \ar@{^{(}->}[ur] \ar[rr]^{\ev_p} &  & \cM(n,r)
}
\]
with a \textit{proper} evaluation map $\widehat{\ev}_p$. The explicit construction of the  moduli space of relative quasimaps
is given in Section 6.4 of \cite{pcmilect}.

Let $L=\C^n$  and  $\tau \in K_{GL(L)}=\Lambda[x_1^{\pm 1},\cdots,x_n^{\pm 1}]$ be a symmetric Laurent polynomial. Every such polynomial corresponds to a virtual representation $\tau(L)$ of $GL(L)$ which is a tensorial polynomial in $L$. For example,
elementary symmetric function $\tau=e_{k}(x_i)$ corresponds to $\tau(L)=\Lambda^{\!k} L$. The \textit{bare vertex} with a descendent $\tau$ is defined by the following formal power series:
\be\label{bare}
V^{(\tau)}(z)=\sum\limits_{d=0}^{\infty}\,\ev_{p_2,*}\Big( \qm^{d}_{p_2}(n,r), \vss\otimes \tau(\left.\qmV\right|_{p_1})  \Big) z^d \in K_{\bG}\Big(\cM(n,r)\Big)_{loc}\otimes \Q[[z]]
\ee
Here $\qmV$ is the rank $n$ degree $d$ bundle on $\mathbb{P}^1$ defining the quasimap
and $\vss$ is the virtual structure of $\qm^{d}_{p_2}(n,r)$.  The pushforward $\ev_{p_2,*}$ is not proper; however, the fixed locus of $\bG$ is (in fact, the fixed locus is a set of finitely many isolated points in this case). Therefore, the pushforward is well defined in localized $K$-theory.

The \textit{capped vertex} with a descendent $\tau$ is a similar object defined for quasimap moduli space relative to $p_2$:
\be
\label{capped}
\hat{V}^{(\tau)}(z)=\sum\limits_{d=0}^{\infty}\,\widehat{\ev}_{p_2,*}\Big( \widehat{\qm}^{d}_{p_2}(n,r), \vss\otimes \tau(\left.\qmV\right|_{p_1})  \Big) z^d \in K_{\bG}\Big(\cM(n,r)\Big)\otimes \Q[[z]]
\ee
Now, $\widehat{\ev}_{p_2}$ is proper and the result lives in  non-localized $K$-theory.

By definition, the degree zero quasimaps  $\widehat{\qm}^{0}_{p_2}(n,r)=\cM(n,r)$ correspond to the constant maps from $\mathbb{P}$ to the instanton moduli space. In this case we have $\left.\qmV\right|_{p_1}=\tb$ where $\tb$ is a tautological bundle on $\cM(n,r)$.
By definition of the capped vertex :
$$
\hat{V}^{(\tau)}(z)=\tau(\tb) {\cal{K}}^{1/2}+ O(z)
$$
where ${\cal{K}}$ is the canonical bundle on $\cM(n,r)$.

 As an element of non-localized $K$-theory the capped vertex is a simpler object. For example at large $r$ all quantum corrections vanish:
\begin{Theorem} (Theorem 7.5.23 in \cite{pcmilect})
\label{largeth}
For every $\tau$ there exist $r\gg0$ such that $\hat{V}^{(\tau)}(z)=\tau(\tb) {\cal{K}}^{1/2}$.
\end{Theorem}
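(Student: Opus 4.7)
The plan is to combine the properness of $\widehat{\ev}_{p_2}$ with an $\bA$-weight counting argument that forces the higher-degree contributions to vanish once $r$ is large enough. Since $\widehat{\ev}_{p_2}$ is proper, every coefficient $[z^d]\hat{V}^{(\tau)}(z)$ is an honest class in $K_{\bG}(\cM(n,r))$, so its restriction to any torus-fixed point is a Laurent polynomial in $a_1,\dots,a_r,t_1,t_2,q$, not just a rational function. This polynomiality will be the main constraint exploited below.

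First, I would rewrite $\hat{V}^{(\tau)}(z)$ inside localized $K$-theory in terms of the capping operator $\Psi(z)$ and the bare vertex: $\hat{V}^{(\tau)}(z)=\Psi(z)\,V^{(\tau)}(z)$. A priori both factors are rational in the $a_i$, with explicit poles at $a_i=a_j$ coming from the $\bT$-fixed locus of $\cM(n,r)$; the polynomiality of $\hat{V}^{(\tau)}(z)$ already proved then forces every such pole to cancel coefficient by coefficient. Next I would invoke the factorization Theorems \ref{facth} and \ref{facver}: for any splitting $\bC\subset\bA$ inducing $r=r_1+r_2$ as in (\ref{cfps}), both $\Psi(z)$ and $V^{(\tau)}(z)$ factorize, up to explicit prefactors, into the analogous objects on $\cM(r_1)\times\cM(r_2)$. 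Iterating this splitting peels off the framing directions one by one and reduces the question to an asymptotic statement as $a_i\to 0$.

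The heart of the argument is then an $\bA$-weight estimate. The descendent $\tau(\qmV|_{p_1})$ is a symmetric polynomial in the Chern roots of $\qmV|_{p_1}$, so its $\bA$-weight is bounded by a constant depending only on $n$ and $\deg\tau$, and in particular is independent of both $d$ and $r$. On the other hand, the virtual structure sheaf $\vss$ on $\widehat{\qm}^{d}_{p_2}(n,r)$ carries $\bA$-weights coming from the deformation-obstruction complex of framed sheaves, whose magnitude grows linearly in $r$ for each $d>0$. For $d\ge 1$ and $r$ above a threshold $r_0=r_0(n,\tau)$, this growing weight cannot be compensated by the uniformly bounded descendent, so the polynomiality enforced by properness together with the factorization theorems forces $[z^d]\hat{V}^{(\tau)}(z)=0$. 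At $d=0$ the contribution is manifestly the constant term $\tau(\tb)\,\cK^{1/2}$ coming from the identification $\widehat{\qm}^{0}_{p_2}(n,r)=\cM(n,r)$.

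The main obstacle I anticipate is obtaining the vanishing \emph{uniformly in $d$}: the straightforward weight count produces a threshold $r_0(n,\tau,d)$ that could, a priori, grow with $d$. This is precisely where the factorization theorems are indispensable, since they control the full $z$-dependent generating series in the $a_i\to 0$ asymptotic regime rather than a single coefficient at a time. They imply that the tail $\sum_{d>0}[z^d]\hat{V}^{(\tau)}(z)\,z^d$ has a prescribed asymptotic behaviour under $a_i\to 0$ that, combined with the weight bound above, cannot be matched by any nontrivial power series once $r$ exceeds a threshold depending only on $\tau$ and $n$. Hence the tail collapses entirely, yielding the theorem.
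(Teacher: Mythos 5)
There is a genuine gap, and before addressing it note that this statement is not proved in the paper at all: it is imported verbatim from \cite{pcmilect} (Theorem 7.5.23 there) and used as a black box in the proof of Theorem \ref{ratth}, so there is no in-paper argument for you to match. The standard proof runs a rigidity argument in the variable $q$, not in the framing weights: properness of $\widehat{\ev}_{p_2}$ makes each coefficient $[z^d]\hat{V}^{(\tau)}$, restricted to a $\bT$-fixed point, a Laurent \emph{polynomial} in $q$; the localization formula (\ref{bdv}) shows that every degree-$d$ contribution carries the overall polarization factor $q^{-r|d|/2}$, while the descendent $\tau(\lb,\dd)$ contributes at most $q^{C_\tau d}$ with $C_\tau$ depending only on $\tau$; hence for $r>2C_\tau$ both limits $q\to 0$ and $q\to\infty$ of the $d>0$ coefficient vanish, and a Laurent polynomial vanishing in both limits is zero. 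Uniformity in $d$ is automatic because the $q$-degree bound is $-d(r/2-C_\tau)$, negative for all $d\ge 1$ simultaneously.

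Your version replaces this with an ``$\bA$-weight'' estimate, and that is where the argument fails. The $\bA$-weights occurring in the virtual tangent space $T^{vir}_{(\lb,\dd)}$ are ratios $a_i/a_j$ (see (\ref{Sfun})); their individual ``magnitude'' does not grow with $r$ --- only their number does --- so there is no quantity that the bounded $\bA$-weight of $\tau$ fails to compensate. Worse, the $d=0$ term $\tau(\tb)\cK^{1/2}$ itself restricts to fixed points with nontrivial $\bA$-weights, so bounded $\bA$-weight cannot be the criterion that kills $d>0$ while sparing $d=0$. Finally, your appeal to Theorems \ref{facth} and \ref{facver} to repair the uniformity in $d$ does not work: those theorems only control the $a\to 0$ limit and express the rank-$r$ capped vertex through \emph{lower}-rank capped vertices, which are generally not classical (e.g.\ $r=1$ with $\tau\ne 1$), so the induction they suggest has no base and runs in the wrong direction for a large-$r$ vanishing statement. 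The correct mechanism --- Laurent polynomiality in $q$ plus the $q^{-r|d|/2}$ factor plus the two-sided limit argument --- is entirely absent from the proposal.
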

In fact, numerical computations shows that one can give the precise bound for $r$ in this theorem.
Assume that the descendent is given by a Schur polynomial $\tau=s_\lambda(x_1,\cdots,x_n)$ for a partition $\lambda=(\lambda_1\geq\lambda_2\geq\cdots \geq\lambda_k)$.
\begin{Conjecture}
For every $\tau=s_{\lambda}(x_1,\cdots,x_n)$ the capped vertex is classical $\hat{V}^{(\tau)}(z)=\tau(\tb) {\cal{K}}^{1/2}$ if and only if $r>\lambda_1$.
\end{Conjecture}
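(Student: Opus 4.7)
My plan is to combine the factorization theorems of this paper with the explicit bare vertex formula of Section~\ref{vers} and a Schur-function branching analysis, treating the two implications of the equivalence separately.

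For the \emph{sufficiency} direction, $r>\lambda_1 \Rightarrow \hat{V}^{(s_\lambda)}(z)=s_\lambda(\tb)\,{\cal K}^{1/2}$, I would induct on $r$ with Theorem~\ref{largeth} as the base case. For the inductive step, fix a one-dimensional subtorus $\bC\subset \bA$ splitting the framing as $r=(r-1)+1$. By Theorems~\ref{facth} and~\ref{facver}, the capping operator and the bare vertex both factorize along the fixed locus $\cM(r-1)\times \cM(1)$ in the appropriate limit of the equivariant parameter, so the capped vertex factorizes as well. The descendent decomposes via the Schur branching rule
\[
s_\lambda(\qmV_1\oplus \qmV_2)\big|_{p_1} \;=\; \sum_{\mu\subseteq \lambda} s_\mu(\qmV_1|_{p_1})\, s_{\lambda/\mu}(\qmV_2|_{p_1}).
\]
The summand $\mu=\lambda$ (so $\lambda/\mu=\emptyset$) reduces to the rank-$(r-1)$ case, which satisfies $r-1>\lambda_1-1$ and is handled by induction. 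The remaining summands, involving nontrivial skew shapes on the rank-one Hilbert scheme factor, carry quantum corrections that I expect to cancel against the non-identity action of the capping operator on the first factor. Since $\hat{V}^{(s_\lambda)}(z)$ is a rational function of the $a_i$ by Theorem~\ref{ratth}, establishing classicality on sufficiently many codimension-one subvarieties — by iterating the splitting over choices of $\bC$ — propagates the conclusion to generic equivariant parameters.

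For the \emph{necessity} direction, $r\leq \lambda_1 \Rightarrow$ nonclassicality, I would exhibit an explicit nonvanishing quantum correction. The capping operator acts as the identity at $z=0$, so the $z^1$-coefficient of $\hat{V}^{(s_\lambda)}(z)$ equals that of the bare vertex $V^{(s_\lambda)}(z)$ up to a classical term. A direct fixed-point computation on $\qm^{1}_{p_2}(n,r)$ using the formula of Section~\ref{vers}, with $s_\lambda$ expressed via Jacobi-Trudi, should yield a residue that fails to cancel precisely when $r\leq \lambda_1$: there are then too few framing characters to force the determinantal expression for $s_\lambda$ to collapse.

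The main obstacle in both directions is the sharpness of the bound. The factorization theorems readily yield only upper bounds on the minimum rank ensuring classicality, and matching the exact threshold $r=\lambda_1+1$ requires delicate combinatorial bookkeeping. In the sufficiency direction, this amounts to showing that the quantum corrections from nontrivial skew Schur summands cancel against the non-identity action of the capping operator — a phenomenon I expect to reflect a deeper algebraic identity in the quantum toroidal algebra $\gt$. In the necessity direction, one must rule out unforeseen cancellations in the explicit residue computation. These combinatorial and algebraic matchings, rather than the structural use of the factorization theorems, are where I anticipate the real work.
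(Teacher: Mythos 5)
First, a point of reference: the paper does not prove this statement. It is stated as a Conjecture, motivated only by the remark that ``numerical computations'' suggest the precise bound, so there is no proof in the paper to compare yours against; you are attempting an open problem.

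Your proposal has genuine gaps in both directions. In the sufficiency direction the induction does not close: splitting $r=(r-1)+1$ and keeping the summand $\mu=\lambda$ leaves you with the descendent $s_\lambda$ on the rank-$(r-1)$ moduli space, and the inductive hypothesis you would need is $r-1>\lambda_1$, not the $r-1>\lambda_1-1$ you wrote (the partition does not shrink in that summand). Indeed at the boundary case $r=\lambda_1+1$ the conjecture itself asserts that the rank-$(r-1)$ capped vertex with descendent $s_\lambda$ is \emph{not} classical, so the term you hope to control by induction is precisely the nonclassical one. A second structural problem is that Theorems~\ref{facth} and~\ref{facver} only control the limit $a\to 0$ along a subtorus $\bC\subset\bA$; note in particular that Theorem~\ref{facver} produces $V^{(r_2),(1)}$ with \emph{trivial} descendent on the second factor, because $\varphi_{\lb}(\Box)\to 0$ for boxes in the second group — so the nontrivial skew summands of your branching rule do not survive the limit at all, rather than ``cancelling against the capping operator.'' Knowing the limits of a rational function of the $a_i$ along finitely many (or even all) one-parameter subtori does not determine the function, so ``propagating to generic equivariant parameters'' is not a valid step. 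In the necessity direction, your identification of the $z^1$-coefficient of $\hat V^{(\tau)}$ with that of the bare vertex ``up to a classical term'' ignores the contribution $\Psi_1 V_0$ from the degree-one part of the capping operator, which is itself a nontrivial quantum object; and the key claim — that the resulting residue is nonzero exactly when $r\le\lambda_1$ — is asserted, not established. As it stands the proposal is a research plan whose hard combinatorial core (the sharpness of the bound in both directions) is exactly the part left undone, which is consistent with the statement remaining a conjecture in the paper.
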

In particular, this conjecture implies that for the Hilbert schemes of points on a plane $\textrm{Hilb}^{n}(\C^2)$ corresponding to $r=1$ the capped vertex is classical only in the absence of descendents, i.e, for  $\tau=1$.
In general, all terms in the power series (\ref{capped}) are non-vanishing. However, our main theorem says that this power series is in fact a rational function:
\begin{Theorem}
\label{ratth}
The power series $\hat{V}^{(\tau)}(z)$ is the Taylor expansion in $z$ of a rational function in $\Q(u_1,\cdots,u_r,t_1,t_2,q,z)$.
\end{Theorem}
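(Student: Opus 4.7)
The plan is to derive rationality from the two factorization theorems via the standard splitting of the capped vertex into the rubber part acting on the bare vertex,
\[
\hat{V}^{(\tau)}(z) \;=\; \Psi(z)\, V^{(\tau)}(z),
\]
where $\Psi(z)$ is the fundamental solution of the quantum difference equation. Neither factor is rational in $z$ individually --- both are genuine $q$-difference/hypergeometric type power series --- so the rationality of their product is a nontrivial cancellation. Theorems \ref{facth} and \ref{facver} encode this cancellation asymptotically: as $a \to 0$ along any one-parameter subtorus $\bC \subset \bA$ splitting $r = r_1 + r_2$, the capping operator factorizes on the $\bC$-fixed locus $\cM(r_1)\times \cM(r_2)$ as $\Psi_r \to \Psi_{r_1}\otimes \Psi_{r_2}$, and similarly $V^{(\tau)}_r \to V^{(\tau_1)}_{r_1}\otimes V^{(\tau_2)}_{r_2}$. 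Multiplying the two, $\hat{V}^{(\tau)}_r$ factorizes on $\cM(r_1)\times \cM(r_2)$ as $\hat{V}^{(\tau_1)}_{r_1}\otimes \hat{V}^{(\tau_2)}_{r_2}$, up to an explicit equivariant prefactor that is independent of $z$.

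I then argue by induction on the framing rank $r$. The base step is supplied by Theorem \ref{largeth}: for any fixed descendent $\tau$, the capped vertex reduces to the classical term $\tau(\tb)\cK^{1/2}$ once $r$ is sufficiently large, and is in particular rational. For the inductive step the key observation is that $\hat{V}^{(\tau)}_r(z)$ takes values in \emph{non-localized} equivariant $K$-theory of $\cM(n,r)$, so the restriction to every isolated $\bA$-fixed point is a Laurent polynomial in the framing weights $a_i$ of degree bounded by a quantity depending only on $n$ and $r$, and independent of the quasimap degree $d$ being summed. Iterating the factorization across one-parameter subtori that successively split $r$ down to rank-one pieces, and terminating the descent by Theorem \ref{largeth} applied with a sufficiently large auxiliary framing, one obtains that every $\bA$-fixed-point restriction of $\hat{V}^{(\tau)}_r(z)$ is a rational function of $z$.

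The principal obstacle is the final passage from rationality of the $\bA$-fixed-point restrictions back to rationality of the full class $\hat{V}^{(\tau)}_r(z)$. This relies on a rigidity input: the equivariant denominators that mediate between the non-localized class and its fixed-point restrictions are polynomial in $a_i, t_1, t_2$ (the tangent weights at the isolated fixed points) and carry no $z$-dependence, so reassembling the localized data cannot introduce non-rational behaviour in $z$. The technically delicate point is to check that Theorems \ref{facth} and \ref{facver} provide asymptotic information sharp enough to pin down each fixed-point contribution unambiguously, and that the equivariant prefactors arising at each step of the descent are indeed $z$-independent (or at worst rational in $z$). Given the explicit form of the two factorization theorems, this amounts to a careful bookkeeping with the tangent weights at the $\bA$-fixed points of $\cM(n,r)$, and is the content of the short argument promised in Section \ref{pfrat}.
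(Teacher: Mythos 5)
You have assembled the correct ingredients --- the decomposition $\hat V^{(\tau)}(z)=\Psi(z)V^{(\tau)}(z)$, the two factorization theorems, and Theorem \ref{largeth} at large rank --- but the derivation is not actually carried out, and two load-bearing claims in your sketch are wrong or unjustified. First, the prefactor in the factorization of the capping operator is \emph{not} independent of $z$: Theorem \ref{facth} produces the operator $Y^{(r_1),(r_2)}(z)$, which is genuinely $z$-dependent; the whole proof hinges on it being a rational, explicitly invertible function of $z$, not a constant, and your parenthetical hedge ``or at worst rational in $z$'' is precisely the point that must be isolated and used rather than left as an afterthought. Second, your ``key observation'' that the $\bA$-fixed-point restrictions of $\hat V^{(\tau)}_r(z)$ are Laurent polynomials in the $a_i$ of degree bounded independently of the quasimap degree $d$ does not follow from membership in non-localized $K$-theory ($K_{\bT}(\cM(n,r))$ is a module over $\Z[a_i^{\pm1},t_1^{\pm1},t_2^{\pm1}]$ and nothing bounds the $a$-degree of the coefficient of $z^d$ uniformly in $d$); moreover it is not needed. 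Third, Theorem \ref{facver} puts the \emph{trivial} descendent on the second factor, $V^{(r_2),(1)}(z\hbar^{-r_1/2}q^{-r_1})$, not a second descendent $\tau_2$; this is what allows the first tensor factor to be identified as a shifted capped vertex.

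No induction, no iteration down to rank one, and no reassembly from fixed-point restrictions is required. Fix the target rank $r_1$ and $\tau$, choose $r=r_1+r_2$ large enough that Theorem \ref{largeth} gives $\Psi^{(r)}(z)V^{(r),(\tau)}(z)=\tau(\tb){\cal{K}}^{1/2}$, and take $a\to 0$ of this identity using Theorems \ref{facth} and \ref{facver}:
$$
(\tau(\tb)\otimes 1){\cal{K}}^{1/2}=Y^{(r_1),(r_2)}(z)\Big(\cv^{(r_1),(\tau)}(z\hbar^{\frac{r_2}{2}})\otimes \Psi^{(r_2)}(z\hbar^{-\frac{r_1}{2}})V^{(r_2),(1)}(z\hbar^{-\frac{r_1}{2}}q^{-r_1})\Big).
$$
Since $Y^{(r_1),(r_2)}(z)$ has rational entries and is invertible, the tensor product on the right is a vector with entries rational in $z$; projecting onto the components where the second tensor factor sits in the vacuum extracts $\cv^{(r_1),(\tau)}(z\hbar^{r_2/2})$ as a rational function, and undoing the shift finishes the proof. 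Your sketch never performs this extraction step, which is where the rationality is actually produced.
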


\subsection{Capping operator}
In computing the capped vertex one can separate the contributions of the $\qm^{d}_{p_2}(n,r)$ (which corresponds to the bare vertex) and the ``rubber'' part of the relative moduli space, see Section 7  of \cite{pcmilect}:
\be
\label{cver}
\cv^{(\tau)}(z)=\Psi(z) V^{(\tau)}(z).
\ee
Here
$$
\Psi(z)=K_{\bG}\Big(\cM(n,r)\Big)_{loc}^{\otimes 2}\otimes \Q[[z]]
$$
is the so called capping operator corresponding to the contribution of the rubber part. The matrix $\Psi(z)$ can be computed explicitly as a matrix of the \textit{fundamental solution of the quantum difference equation}:
\be
\label{qde1}
\Psi(z) {{\cal{O}}(1)}=\gM(z)\Psi(z)
\ee
where ${{\cal{O}}(1)}$ is an operator of multiplication by the corresponding line bundle in
$K_{\bT}(\cM(n,r))$. The operator $\gM(z)$ acts in $K_{\bT}(\cM(n,r))$ and has rational in $z$ matrix coefficients. It is constructed explicitly in Section \ref{qdeopsec}.
\subsection{Factorization theorems}
Assume that the torus $\bC$ splits the framing $r$ to $r_1+a r_2$,  so that the set of $\bC$-fixed points is given by (\ref{cfps}).
By definition, the coefficients of power series
$\Psi(z)$ and $V^{(\tau)}(z)$ are given by classes of localized $K$-theory  and thus are rational functions of $a$. We are interested in $a\to 0$ limits of these power series.
In Section \ref{focks} we describe an action of the quantum Heisenberg algebra $\fh$ on $K_{\bT}\Big(\cM(r)\Big)$.
Let $\alpha_k$, $k \in \Z \setminus\{0\}$ and $K$ be the standard generators of $\fh$. In Section \ref{psec} we prove the following result:
\begin{Theorem}
\label{facth}
If $\Psi^{(r)}(z)$ is the solution of (\ref{qde1}) for $\cM(r)$ then:
\be
\label{mainf}
\lim\limits_{a \to 0}\Psi^{(r)}(z)= Y^{(r_1),(r_2)}(z) \, \Psi^{(r_1)}(z \hbar^{\frac{r_2}{2}} ) \otimes \Psi^{(r_2)}(z \hbar^{-\frac{r_1}{2}})
\ee
where $Y^{(r_1),(r_2)}(z)$ is the evaluation of the following universal element
\be\label{uY}
Y(z)=\exp\Big( -\sum\limits_{k=1}^{\infty}\, \dfrac{(\hbar-\hbar^{-1}) K^k\otimes  K^{-k}}{1-z^{-k} K^k\otimes  K^{-k}} \,  \alpha_{-k} \otimes \alpha_{k}  \Big) \in \fh^{\otimes 2}\!(z)
\ee
in the $\fh^{\otimes 2}$ - representation $K_{\bT}\Big(\cM(r_1)\Big)\otimes K_{\bT}\Big(\cM(r_2)\Big)$.
\end{Theorem}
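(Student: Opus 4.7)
The plan is to analyze the $a\to 0$ asymptotics of the qde (\ref{qde1}) on the $\bC$-fixed locus and then invoke uniqueness of the fundamental solution. Restricting to the fixed components $\cM(r)^{\bC}=\cM(r_1)\times\cM(r_2)$, the line bundle $\cO(1)$ splits as an external tensor product, and in localized $K$-theory one has a canonical isomorphism $K_{\bT}(\cM(r))^{\bC}_{\text{loc}}\cong K_{\bT}(\cM(r_1))\otimes K_{\bT}(\cM(r_2))$ through which the qde restricts. The task is therefore to compute $\lim_{a\to 0}\gM^{(r)}(z)$ under this identification and then to compare fundamental solutions.

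The key step is to use the construction of $\gM^{(r)}(z)$ from the quantum toroidal algebra $\gt$ given in Section \ref{qdedefsec}. Because $\gM^{(r)}(z)$ acts on the level-$r$ Fock representation $K_{\bT}(\cM(r))$, and because under the Heisenberg coproduct of $\gt$ the level-$r$ Fock representation decomposes as a tensor product of level-$r_1$ and level-$r_2$ Fock modules, one expects an intertwining of the form
$$
\lim_{a\to 0}\gM^{(r)}(z)=Y(z)\,\bigl(\gM^{(r_1)}(z\hbar^{r_2/2})\otimes 1\bigr)\bigl(1\otimes \gM^{(r_2)}(z\hbar^{-r_1/2})\bigr)\,Y(z)^{-1},
$$
with $Y(z)$ the element (\ref{uY}); the spectral shifts $z\hbar^{\pm r_j/2}$ arise from the central charges $r_1,r_2$ in the Heisenberg coproduct. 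Concretely, I would compute the leading $a\to 0$ term of $\gM^{(r)}(z)$ on the fixed components, identify the off-diagonal contribution as a normal-ordered exponential in $\alpha_{-k}\otimes \alpha_k$ coming from the bilinear Heisenberg interaction, and recognize the resulting normal-ordered sum as the geometric-series rewriting in (\ref{uY}).

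Granting this intertwining, the factorization (\ref{mainf}) is immediate from uniqueness. Substituting the ansatz $Y(z)\,\Psi^{(r_1)}(z\hbar^{r_2/2})\otimes \Psi^{(r_2)}(z\hbar^{-r_1/2})$ into the limiting qde and using that each $\Psi^{(r_j)}$ satisfies its own shifted qde, the factor $Y(z)$ conjugates through to reproduce exactly the intertwining above. Both sides of (\ref{mainf}) are therefore regular solutions of the same $q$-difference equation, and both evaluate to the identity at $z=0$ (up to the common $\cK^{1/2}$ normalization), so uniqueness of the fundamental solution forces equality.

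The main obstacle will be the second step: extracting the precise form of $Y(z)$ from the $a\to 0$ expansion of $\gM^{(r)}(z)$, tracking the exact $\hbar^{\pm r_j/2}$ shifts produced by the Heisenberg central charges acting at levels $r_1,r_2$, and confirming that the non-Heisenberg generators of $\gt$ entering $\gM^{(r)}(z)$ produce no additional off-diagonal contributions surviving the $a\to 0$ limit. This is essentially a careful computation inside $\gt$ combined with the known structure of the Heisenberg subalgebra action described in Section \ref{focks}; once it is in place, the rest of the proof is a formal uniqueness argument for $q$-difference equations.
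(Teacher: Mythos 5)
Your overall skeleton (restrict to the $\bC$-fixed locus, exhibit a gauge transformation $Y(z)$ relating the limiting difference operator to the tensor product of the two shifted ones, conclude by uniqueness of the fundamental solution with $Y(0)=1$) is consistent with the structure of the result, but the step you defer as ``essentially a careful computation inside $\gt$'' is precisely the content of the theorem, and the route you propose for it would not go through. The operator $B(z)$ defining $\gM^{(r)}(z)$ is built from the Heisenberg subalgebras of \emph{all} slopes $-1\le w<0$, not from the slope-zero subalgebra $\fh$; the coproduct $\Delta$ is only simple on $\fh$, and the $a\to 0$ behaviour of the evaluated negative-slope generators is not controlled by the slope-zero Heisenberg structure. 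There is therefore no direct ``leading $a\to 0$ term of $\gM^{(r)}(z)$'' computation from which the normal-ordered exponential (\ref{uY}) can be read off, and the paper deliberately avoids attempting one. Instead it defines $\gJ^{(r_1),(r_2)}(z)=\Psi^{(r)}(z)\bigl(\Psi^{(r_1)}(z\hbar^{r_2/2})\otimes\Psi^{(r_2)}(z\hbar^{-r_1/2})\bigr)^{-1}$, so that (\ref{mainf}) is immediate once the limit $\lim_{a\to 0}\Psi^{(r)}(z)$ is known to exist (a nontrivial input quoted from \cite{OS} that you also use implicitly), and then identifies $Y=\lim_{a\to 0}\gJ$ by completely different means: the dual $q$-difference equation in the framing parameter $a$ (the qKZ equation), whose $a\to 0$ limit is the wall-KZ relation $z_{(1)}^{d}(R^{-}_{0})^{-1}\hbar^{\Omega}Y=Y\hbar^{\Omega}z_{(1)}^{d}$; a lower-triangular solution of this relation is unique given its block-diagonal part (Lemma \ref{uniq}), the strictly lower-triangular solution is the explicit universal element $E(z)$ (Proposition \ref{propY}), and the block-diagonal part is forced to be $1$ by the dynamical cocycle identity together with $Y^{(r),(0)}=Y^{(0),(r)}=1$ (Proposition \ref{lastpro}). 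None of these ingredients appear in your outline, and without some substitute for them the identification of $Y(z)$ with (\ref{uY}) is unsupported.

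A smaller but real error: your displayed ``intertwining'' is written as a conjugation $\lim_{a\to 0}\gM^{(r)}(z)=Y(z)\bigl(\gM^{(r_1)}\otimes\gM^{(r_2)}\bigr)Y(z)^{-1}$, which is incompatible with $Y$ depending on $z$. Since the capping operators satisfy a $q$-difference (not differential) equation, the correct relation is the gauge transformation $\lim_{a\to 0}\gM^{(r)}(z)=Y(zq)\bigl(\gM^{(r_1)}(z\hbar^{r_2/2})\otimes\gM^{(r_2)}(z\hbar^{-r_1/2})\bigr)Y(z)^{-1}$, as in equation (\ref{spt}) of the paper; with your version the substitution of the ansatz into the limiting qde does not close. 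The final uniqueness step is fine once this is corrected, since $Y(0)=1$ and all $\Psi^{(r_i)}(0)=1$.
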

\noindent
Note that $ \alpha_{-k} \otimes \alpha_{k}$ act on the corresponding $K$-theory as locally nilpotent operators.
Thus, the coefficients of $Y^{(r_1),(r_2)}(z)$ are rational functions of~$z$ for all $r_1$ and $r_2$.
In Section \ref{vers}  we also prove a similar result for the bare vertex:
\begin{Theorem}
\label{facver}
For a descendent $\tau\in\Lambda[x_1,\cdots, x_n]$ we have\footnote{In full generality
the descendent $\tau$ can be a symmetric Laurent polynomial $\tau\in K_{GL(n)}(\cdot)= \Lambda[x_1^{\pm},\cdots, x_n^{\pm}]$, but it will be clear in the proof of this proposition that cases with inverse powers of $x_i$ are treated in the same way when $a\to \infty$. The result remains the same if we change the roles of $r_1$ and $r_2$.}:
\be
\lim\limits_{a\to 0} V^{(r),(\tau)}(z)=V^{(r_1),(\tau)}(z \hbar^{\frac{r_2}{2}}  ) \otimes V^{(r_2),(1)}(z \hbar^{-\frac{r_1}{2}} q^{-r_1}    )
\ee
\end{Theorem}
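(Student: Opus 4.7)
The plan is to parallel the strategy used for Theorem~\ref{facth}: start from an explicit fixed-point formula for the bare vertex given in Section~\ref{vers}, apply $\bC$-equivariant localization, and analyze the $a\to 0$ asymptotics term by term. The formula has the schematic shape
$$
V^{(r),(\tau)}(z)=\sum_{\vec\lambda}\,\sum_{d\ge 0}\, z^{d}\,\mathfrak{Z}_{\vec\lambda,d}\bigl(\tau;\,a_{1},\ldots,a_{r},t_{1},t_{2},q\bigr),
$$
where $\vec\lambda$ runs over $r$-tuples of partitions indexing the $\bT$-fixed points of $\cM(n,r)$ and each $\mathfrak{Z}$ is a rational function built from the character of the virtual tangent bundle of $\qm^{d}_{p_{2}}(n,r)$ together with the descendent insertion $\tau(\qmV|_{p_{1}})$.

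Under the splitting $\bC\subset\bA$ each $\vec\lambda$ decomposes as a pair $(\vec\lambda^{(1)},\vec\lambda^{(2)})$ of sizes $r_{1}$ and $r_{2}$, and the quasimap degree splits as $d=d_{1}+d_{2}$. I would split every weight of the virtual tangent space into a $\bC$-fixed summand, internal to one of the two factors, and a $\bC$-moving summand, a cross term mixing the two factors. The fixed summands reassemble precisely into the tangent contributions of the vertices $V^{(r_{1}),(\tau)}$ and $V^{(r_{2}),(1)}$ on the two sides; the moving summands are Laurent monomials whose $a$-dependence must be tracked through the limit.

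Because $\vss$ is the symmetrized virtual structure sheaf, pairs of cross weights with opposite $a$-exponents combine to a finite limit as $a\to 0$, namely an explicit monomial in $\hbar$ and $q$ whose exponents are linear in $(r_{1},r_{2},d_{1},d_{2})$. A careful accounting should rewrite $z^{d_{1}+d_{2}}$ times the limiting cross-term monomial as $(z\hbar^{r_{2}/2})^{d_{1}}\cdot(z\hbar^{-r_{1}/2}q^{-r_{1}})^{d_{2}}$, yielding the claimed shifts of $z$ on the two factors.

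The descendent insertion is handled separately. At a split fixed point the Chern roots of $\qmV|_{p_{1}}$ divide into an $r_{1}$-subset (finite as $a\to 0$) and an $r_{2}$-subset, each of whose roots carries a strictly positive power of $a$. Since $\tau\in\Lambda[x_{1},\ldots,x_{n}]$ is polynomial, its value at such roots converges to $\tau$ evaluated on the $r_{1}$-subset alone, placing the descendent $\tau$ on the $r_{1}$ side and leaving the trivial descendent $1$ on the $r_{2}$ side, as the statement requires. I expect the main obstacle to lie in the previous step: correctly tabulating the cross-term weights of the virtual tangent along with the symmetrized polarization, since this accounting is what produces the precise shifts $\hbar^{\pm r_{j}/2}$ and the additional $q^{-r_{1}}$; a small miscount would spoil the factorization.
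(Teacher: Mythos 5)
Your proposal follows essentially the same route as the paper: localize to the $\bG$-fixed points, split the character $S(\lb,\dd)$ of the virtual tangent space into internal and cross terms under the substitution $a_i\mapsto a\,a_i$ for the second group, observe that the symmetrized roof function sends the $\tb\,\tb^{*}$ cross terms to $1$ and the $\tb_i\,\tw_j$ ($i\neq j$) cross terms to the monomials $(-(\hbar q)^{1/2})^{\pm r_j|\dd_i|}$ producing the shifts, and note that $\varphi_{\lb}(\Box)\to 0$ for boxes in $\lb_2$ so the polynomial descendent survives only on the $r_1$ factor. The one step you defer --- the explicit tabulation of the $\tw_j^{*}\,(\tb(\lb_i,\dd_i)-\tb(\lb_i,\overrightarrow{0}))/(q-1)$ contributions --- is exactly the computation the paper carries out, and it confirms your expected shifts.
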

Here, as well as in Theorem \ref{facth}, the additional superscript $(r)$ corresponds to the ranks of the instanton moduli.

\subsection{Proof of the main Theorem \ref{ratth} \label{pfrat}}
The proof of Theorem \ref{ratth} is now elementary.  Indeed, for arbitrary $r_1$ and $\tau$ and in Theorem \ref{facver} let $\cv^{(r_1),(\tau)}(z)$ be the corresponding capped vertex. We need to check that $\cv^{(r_1),(\tau)}(z)$ is a rational function of $z$.  First, by Theorem \ref{largeth} we can find $r$ large enough for the corresponding capped descendent vertex to be classical $\cv^{(r),(\tau)}(z)=\tau(\tb) {\cal{K}}^{1/2}$, i.e. independent of~$z$.

We have:
$$
\tau(\tb) {\cal{K}}^{1/2}=\Psi^{(r)}(z) V^{(r),(\tau)}(z).
$$
The bundle ${\cal{K}}$  does not depend on $a$ and by our choice of $\tau$ we have $\lim\limits_{a\to 1} \tau(\tb)=\tau(\tb)\otimes 1$.
Thus, by the factorization theorems  we obtain:
$$
\begin{array}{l}
(\tau(\tb)\otimes 1) {\cal{K}}^{1/2}=\\
\\
Y^{(r_1),(r_2)}(z) \Psi^{(r_1)}(z \hbar^{\frac{r_2}{2}}  ) V^{(r_1),(\tau)}( z  \hbar^{\frac{r_2}{2}} ) \otimes \Psi^{(r_2)}(z \hbar^{-\frac{r_1}{2}} ) V^{(r_2),(1)}( z  \hbar^{-\frac{r_1}{2}} q^{-r_1} )
\end{array}
$$
Now, the first factor on the right side gives the capped vertex
with shifted parameter $\Psi^{(r_1)}(z \hbar^{\frac{r_2}{2}}) V^{(r_1),(\tau)}( z  \hbar^{\frac{r_2}{2}}  )=\cv^{(r_1),(\tau)}(z \hbar^{\frac{r_2}{2}})$.
The operator $Y^{(r_1),(r_2)}(z)$ from Theorem \ref{facth} is explicitly  invertible, therefore:
$$
\cv^{(r_1),(\tau)}(z \hbar^{\frac{r_2}{2}}) \otimes \Psi^{(r_2)}(z \hbar^{-\frac{r_1}{2}}) V^{(r_2),(1)}( z  \hbar^{-\frac{r_1}{2}} q^{-r_1} )=Y^{(r_1),(r_2)}(z)^{-1} (\tau(\tb)\otimes 1) {\cal{K}}^{1/2}
$$
The matrix $Y^{(r_1),(r_2)}(z)$ has rational coefficients, so  $Y^{(r_1),(r_2)}(z)^{-1}$ has. Thus, in the right side we have a vector whose components are rational functions of $z$. The first component of this vector is $\cv^{(r_1),(\tau)}(z \hbar^{\frac{r_2}{2}})$ and thus is also a rational function. Of course, the property to be rational does not depend on the shift, so  $\cv^{(r_1),(\tau)}(z)$ is a rational function of $z$. $\Box$

\subsection{Acknowledgements}
I would like to thank A.~Okounkov for guidance and his interest to this work.   
The author was also supported in part by RFBR grants 15-31-20484 mol-a-ved and RFBR 15-02-04175.

\section{Quantum toroidal algebra $\frak{gl}_1$ \label{torsec}}
\subsection{Generators and relations}
Let us set $\hZ =\matZ^2$, $\hZ^{*}=\hZ\ \{(0,0)\}$ and:
$$
\hZ^{+}=\{ (i,j)\in \hZ; i>0 \ \ \textrm{or} \ \ i=0,\ \ j>0 \}, \ \ \hZ^{-}=-\hZ^{+}
$$
Set
$$n_{k}=\dfrac{(t_1^{\frac{k}{2}}-t_1^{-\frac{k}{2}})(t_2^{\frac{k}{2}}-t_2^{-\frac{k}{2}})
(\hbar^{-\frac{k}{2}}-\hbar^{\frac{k}{2}})}{k}$$
 and for vector $\textbf{a}=(a_1,a_2) \in \hZ$ denote by $\textrm{deg}(\textbf{a})$ the greatest common divisor of $a_1$ and $a_2$. We set $\epsilon_\textbf{a}=\pm 1$ for $\textbf{a}\in \hZ^{\pm}$. For a pair non-collinear vectors we set $\epsilon_{\textbf{a},\textbf{b}}=\textrm{sign}(\det(\textbf{a},\textbf{b}))$.

\noindent

The ``toroidal'' algebra $\gt$ is an associative algebra with $1$ generated by elements $e_{\textbf{a}}$ and $K_\textbf{a}$ with $\textbf{a} \in \hZ$, subject to the following relations \cite{SchifVas}:
\begin{itemize}
\item  elements $K_\textbf{a}$ are central and
$$K_0=1, \ \ \ K_\textbf{a} K_\textbf{b}=K_{\textbf{a}+\textbf{b}}$$
\item if  $\textbf{a}$, $\textbf{b}$ are two collinear vectors then:
\be
\label{colin}
[e_\textbf{a},e_\textbf{b}]=\delta_{\textbf{a}+\textbf{b}} \dfrac{K^{-1}_\textbf{a}-K_\textbf{a}}{n_{\textrm{deg}(\textbf{a})}}
\ee
\item if $\textbf{a}$ and $\textbf{b}$ are such that $\textrm{deg}(\textbf{a})=1$ and the triangle $\{(0,0), \textbf{a}, \textbf{b}\}$
has no interior lattice points then
$$
[e_\textbf{a},e_\textbf{b}]=\epsilon_{\textbf{a},\textbf{b}} K_{\alpha(\textbf{a},\textbf{b})} \, \dfrac{\Psi_{\textbf{a}+\textbf{b}}}{n_1}
$$
where
$$
\alpha(\textbf{a},\textbf{b})=\left\{\begin{array}{ll}
\epsilon_{\textbf{a}}(\epsilon_{\textbf{a}} \textbf{a} +\epsilon_\textbf{b} \textbf{b} -\epsilon_{\textbf{a}+\textbf{b}}(\textbf{a}+\textbf{b})  )/2 & \textrm{if} \ \ \epsilon_{\textbf{a},\textbf{b}}=1\\
\epsilon_{\textbf{b}}(\epsilon_{\textbf{b}} \textbf{b} +\epsilon_\textbf{b} \textbf{b} -\epsilon_{\textbf{a}+\textbf{b}}(\textbf{a}+\textbf{b})  )/2 & \textrm{if} \ \ \epsilon_{\textbf{a},\textbf{b}}=-1
\end{array}\right.
$$
and elements $\Psi_\textbf{a}$ are defined by:
$$
\sum\limits_{k=0}^{\infty} \, \Psi_{k \textbf{a}} z^k = \exp\Big( \sum\limits_{m=1}^{\infty}\, n_m \, e_{m\, \textbf{a}} z^m \Big)
$$
for $\textbf{a}\in \hZ$ such that $\textrm{deg}(\textbf{a})=1$.
\end{itemize}
\hspace{2mm}
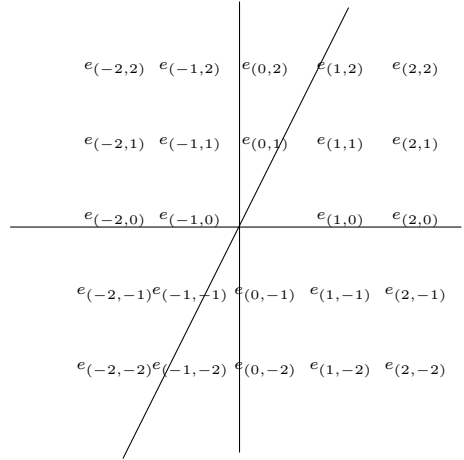
\begin{figure}[ht]
\begin{center}
\begin{tikzpicture}
\draw (-3,0)--(3,0);
\draw (-1.5,-3.08)--(1.5,2.92);
\draw (0.05,-3)--(0.05,3);
\node[mark size=1pt,color=black] at (2,-3) {\pgfuseplotmark{*}};
\node[mark size=1pt,color=black] at (2,-2) {\pgfuseplotmark{*}};
\node[mark size=1pt,color=black] at (2.4,-1.9) {\tiny{$e_{(2,-2)}$}};
\node[mark size=1pt,color=black] at (1,-1) {\pgfuseplotmark{*}};
\node[mark size=1pt,color=black] at (2.4,-0.9) {\tiny{$e_{(2,-1)}$}};
\node[mark size=1pt,color=black] at (2.4,0.1) {\tiny{$e_{(2,0)}$}};
\node[mark size=1pt,color=black] at (2,0) {\pgfuseplotmark{*}};
\node[mark size=1pt,color=black] at (2.4,1.1) {\tiny{$e_{(2,1)}$}};
\node[mark size=1pt,color=black] at (2,1) {\pgfuseplotmark{*}};
\node[mark size=1pt,color=black] at (2.4,2.1) {\tiny{$e_{(2,2)}$}};
\node[mark size=1pt,color=black] at (2,2) {\pgfuseplotmark{*}};
\node[mark size=1pt,color=black] at (2,3) {\pgfuseplotmark{*}};
\node[mark size=1pt,color=black] at (0,-3) {\pgfuseplotmark{*}};
\node[mark size=1pt,color=black] at (0,-2) {\pgfuseplotmark{*}};
\node[mark size=1pt,color=black] at (0.4,-1.9) {\tiny{$e_{(0,-2)}$}};
\node[mark size=1pt,color=black] at (0,-1) {\pgfuseplotmark{*}};
\node[mark size=1pt,color=black] at (0.4,-0.9) {\tiny{$e_{(0,-1)}$}};
\node[mark size=1pt,color=black] at (0,0) {\pgfuseplotmark{*}};
\node[mark size=1pt,color=black] at (0,1) {\pgfuseplotmark{*}};
\node[mark size=1pt,color=black] at (0.4,1.1) {\tiny{$e_{(0,1)}$}};
\node[mark size=1pt,color=black] at (0,2) {\pgfuseplotmark{*}};
\node[mark size=1pt,color=black] at (0.4,2.1) {\tiny{$e_{(0,2)}$}};
\node[mark size=1pt,color=black] at (0,3) {\pgfuseplotmark{*}};
\node[mark size=1pt,color=black] at (1,-3) {\pgfuseplotmark{*}};
\node[mark size=1pt,color=black] at (1,-2) {\pgfuseplotmark{*}};
\node[mark size=1pt,color=black] at (1.4,-1.9) {\tiny{$e_{(1,-2)}$}};
\node[mark size=1pt,color=black] at (1,-1) {\pgfuseplotmark{*}};
\node[mark size=1pt,color=black] at (1.4,-0.9) {\tiny{$e_{(1,-1)}$}};
\node[mark size=1pt,color=black] at (1,0) {\pgfuseplotmark{*}};
\node[mark size=1pt,color=black] at (1.4,0.1) {\tiny{$e_{(1,0)}$}};
\node[mark size=1pt,color=black] at (1,1) {\pgfuseplotmark{*}};
\node[mark size=1pt,color=black] at (1.4,1.1) {\tiny{$e_{(1,1)}$}};
\node[mark size=1pt,color=black] at (1,2) {\pgfuseplotmark{*}};
\node[mark size=1pt,color=black] at (1.4,2.1) {\tiny{$e_{(1,2)}$}};
\node[mark size=1pt,color=black] at (1,3) {\pgfuseplotmark{*}};
\node[mark size=1pt,color=black] at (-1,-3) {\pgfuseplotmark{*}};
\node[mark size=1pt,color=black] at (-1,-2) {\pgfuseplotmark{*}};
\node[mark size=1pt,color=black] at (-0.6,-1.9) {\tiny{$e_{(-1,-2)}$}};
\node[mark size=1pt,color=black] at (-1,-1) {\pgfuseplotmark{*}};
\node[mark size=1pt,color=black] at (-0.6,-0.9) {\tiny{$e_{(-1,-1)}$}};
\node[mark size=1pt,color=black] at (-1,0) {\pgfuseplotmark{*}};
\node[mark size=1pt,color=black] at (-0.6,0.1) {\tiny{$e_{(-1,0)}$}};
\node[mark size=1pt,color=black] at (-1,1) {\pgfuseplotmark{*}};
\node[mark size=1pt,color=black] at (-0.6,1.1) {\tiny{$e_{(-1,1)}$}};
\node[mark size=1pt,color=black] at (-1,2) {\pgfuseplotmark{*}};
\node[mark size=1pt,color=black] at (-0.6,2.1) {\tiny{$e_{(-1,2)}$}};
\node[mark size=1pt,color=black] at (-1,3) {\pgfuseplotmark{*}};

\node[mark size=1pt,color=black] at (-2,-3) {\pgfuseplotmark{*}};
\node[mark size=1pt,color=black] at (-2,-2) {\pgfuseplotmark{*}};
\node[mark size=1pt,color=black] at (-1.6,-1.9) {\tiny{$e_{(-2,-2)}$}};
\node[mark size=1pt,color=black] at (-2,-1) {\pgfuseplotmark{*}};
\node[mark size=1pt,color=black] at (-1.6,-0.9) {\tiny{$e_{(-2,-1)}$}};
\node[mark size=1pt,color=black] at (-2,0) {\pgfuseplotmark{*}};
\node[mark size=1pt,color=black] at (-1.6,0.1) {\tiny{$e_{(-2,0)}$}};
\node[mark size=1pt,color=black] at (-2,1) {\pgfuseplotmark{*}};
\node[mark size=1pt,color=black] at (-1.6,1.1) {\tiny{$e_{(-2,1)}$}};
\node[mark size=1pt,color=black] at (-2,2) {\pgfuseplotmark{*}};
\node[mark size=1pt,color=black] at (-1.6,2.1) {\tiny{$e_{(-2,2)}$}};
\node[mark size=1pt,color=black] at (-2,3) {\pgfuseplotmark{*}};

\node[mark size=1pt,color=black] at (-3,-3) {\pgfuseplotmark{*}};
\node[mark size=1pt,color=black] at (-3,-2) {\pgfuseplotmark{*}};
\node[mark size=1pt,color=black] at (-3,-1) {\pgfuseplotmark{*}};
\node[mark size=1pt,color=black] at (-3,0) {\pgfuseplotmark{*}};
\node[mark size=1pt,color=black] at (-3,1) {\pgfuseplotmark{*}};
\node[mark size=1pt,color=black] at (-3,2) {\pgfuseplotmark{*}};
\node[mark size=1pt,color=black] at (-3,3) {\pgfuseplotmark{*}};

\node[mark size=1pt,color=black] at (3,-3) {\pgfuseplotmark{*}};
\node[mark size=1pt,color=black] at (3,-2) {\pgfuseplotmark{*}};
\node[mark size=1pt,color=black] at (3,-1) {\pgfuseplotmark{*}};
\node[mark size=1pt,color=black] at (3,0) {\pgfuseplotmark{*}};
\node[mark size=1pt,color=black] at (3,1) {\pgfuseplotmark{*}};
\node[mark size=1pt,color=black] at (3,2) {\pgfuseplotmark{*}};
\node[mark size=1pt,color=black] at (3,3) {\pgfuseplotmark{*}};

\end{tikzpicture}
\caption{The line with slope $w=2$ corresponds to Heisenberg subalgebra
  generated by $\alpha_{k}^{2}=e_{k,2 k}$ for $k\in \Z\setminus \{0\}$.  \label{pic2}}
  \end{center}
\end{figure}

\subsection{Slope Heisenberg subalgebras}
For $w\in \Q \cup \{\infty\}$ we denote by $d(w)$ and $n(w)$ the denominator and numerator of rational number.
We set $d(\infty)=0$, $n(\infty)=1$ and $n(0)=0$, $d(0)=1$. Let us set:
$$
\alpha^{w}_{k}= e_{(d(w)k, n(w)k)}, \ \ k \in \Z \setminus \{0\}
$$
From (\ref{colin}) we see that for fixed $w\in \Q \cup \{\infty\}$ these elements generate a Heisenberg with the following relations:
$$
[\alpha^{w}_{-k},\alpha^{w}_{k}]=\dfrac{ K_{(1,0)}^{k d(w) } -K_{(1,0)}^{-k d(w) } }{n_k}
$$
We will informally refer to this algebra as ``Heisenberg subalgebra with a slope $w$'' and denote it $\frak{h}_w\subset \gt$.
It is convenient to visualize the algebra $\gt$ as in the Figure \ref{pic2}. Heisenberg subalgebras of $\frak{h}_w$
correspond to lines with slope $w$ in this picture.

The Heisenberg subalgebra with slope $w=0$ will play distinguished role in this paper. In this special case we will often omit the slope superscript: $\frak{h}=\frak{h}_0$, $\alpha_{k}=\alpha^{0}_{k}$  and $K=K_{(1,0)}$.

\subsection{Hopf structures \label{hopfsec}}
The algebra $\gt$ carried different inequivalent Hopf structures. We will use
the Hopf structure with zero slope defined by a coproduct $\Delta :\, \gt \to \gt\otimes \gt$, which has the following explicit form on Heisenberg subalgebra with slope zero:
\be
\begin{array}{l}
\Delta( \alpha_{-k} ) = \alpha_{-k}\otimes 1 + K^{-k} \otimes \alpha_{-k}\\
\\
\Delta( \alpha_{k} ) = \alpha_{k}\otimes K^k + 1 \otimes \alpha_{k}\\
\\
\Delta(K)=K\otimes K
\end{array}
\ee
where $k>0$. The algebra $\gt$ is a triangular Hopf algebra, which means there exist an element
$\Rtot\in \gt\otimes \gt$ (the universal $R$-matrix) which enjoys the following properties. In $\gt^{\otimes 3}$
it satisfies the quantum Yang-Baxter equation:
$$
\Rtot_{12}\Rtot_{13}\Rtot_{23}=\Rtot_{23}\Rtot_{13}\Rtot_{12}
$$
where indexes show in which component of $\gt^{\otimes 3}$ the corresponding $R$-matrix acts. In addition we have:
$$
1 \otimes \Delta (\Rtot) = \Rtot_{13}  \Rtot_{12}, \ \  \Delta \otimes 1 (\Rtot) = \Rtot_{13}  \Rtot_{23}
$$
and:
$$
\Rtot \Delta(g)=\Delta^{op}(g) \Rtot, \ \ \ \forall g \in \gt.
$$
where $\Delta^{op}$ is the opposite coproduct. Explicitly, the universal $R$-matrix is given by the following formula (Khoroshkin-Tolstoy factorization formula)
\be
\label{khtot}
\Rtot=\prod\limits_{w \in \Q  \cup\{\infty\}}^{\leftarrow} \, \exp\Big(\sum\limits_{k=0}^{\infty}\, n_k\, \alpha^{w}_{-k} \otimes \alpha^{w}_{k} \Big)
\ee
the order of factors in this product is given explicitly as \footnote{These two infinite products give the same element in the completion of $\gt^{\!\otimes 2}$. There is, however, an important difference - evaluated in the tensor product of two Fock representations
$\fock(a_1)\otimes \fock(a_2)$ the fist product converges in the topology of power series in $a_1/a_2$ while the second as power series in $a_2/a_1$. In this paper we are dealing with limits  $a_2/a_1\to 0$ of various operators and thus, use the second infinite product.}
:
$$
\Rtot=\prod\limits_{{w\in \Q}\atop {w<0}}^{\rightarrow} R^{-}_{w} \,R_{\infty}\, \prod\limits_{{w\in \Q}\atop {w  \geq 0}}^{\leftarrow} R^{+}_{w}=\prod\limits_{{w\in \Q}\atop {w \geq 0}}^{\leftarrow} \Big(R^{-}_{w}\Big)^{-1} \,\Big(R_{\infty}\Big)^{-1}\, \prod\limits_{{w\in \Q}\atop {w  < 0}}^{\rightarrow} \Big(R^{+}_{w}\Big)^{-1}
$$
where the order is the standard order on the set of rational numbers, and:
\be
\label{wallR}
R^{\pm}_{w}=\prod\limits_{k=0}^{\infty} \,\exp( n_k\, \alpha^{w}_{\pm k} \otimes \alpha^{w}_{\mp k} ) = \exp\Big( \sum\limits_{k=0}^{\infty}\, n_k\, \alpha^{w}_{\pm k} \otimes \alpha^{w}_{\mp k} \Big)
\ee
\subsection{Fock space representations \label{focks}}
Set $\textbf{R}=\Q(t_1^{1/2},t_2^{1/2},a)$. Let $\fock(a)=\textbf{R}[p_1,p_2,\cdots]$ be the set of polynomials in infinitely many variables which we consider as $\textbf{R}$ vector space. Let $\nu$ be a partition and $P_{\nu}\in \fock(a)$ be the corresponding Macdonald polynomials in Haiman's normalization \cite{Haim}\footnote{The parameters $t$, $q$ of Macdonald polynomials are related to $t_1$ and $t_2$ by $q=t_1^{1/2}$, $t=t_2^{1/2}$}. Recall that $P_\nu$ is a basis of the vector space $\fock(a)$.

Let us define a homomorphism $\ev_{a}: \gt \to \textrm{End}\Big(\fock(a)\Big)$ defined explicitly in the basis of Macdonald polynomials by:
\begin{itemize}
\item
On central elements:
\be\label{fi}
\ev_a(K_{(1,0)})\, P_\nu=t_1^{-\frac{1}{2}} t_2^{-\frac{1}{2}} P_\nu, \ \ \ev_a(K_{(0,1)})\,P_\nu=P_\nu
\ee
\item
On Heisenberg subalgebra with slope $0$:
\be
\label{f1}
\ev_a(e_{(m,0)}) P_\nu \mapsto \left\{\begin{array}{rl}
\dfrac{1}{(t_1^{m/2}-t_1^{-m/2})(t_2^{m/2}-t_2^{-m/2})}\,  p_{-m} P_\nu & m<0 \\
\\
 -m \dfrac{\partial P_\nu}{\partial p_m} &   m>0
\end{array}\right.
\ee
\item On Heisenberg subalgebra with slope $\infty$:
\be
\label{f2}
\ev_a (e_{(0,m)})\, P_\nu =  a^{-m} \textrm{sign}(k) \left( \dfrac{1}{ 1-t_1^{m}} \sum\limits_{i=1}^{\infty} \, t_1^{m (\nu_i-1)} t_2^{m(i-1)}   \right) P_\nu
\ee
\end{itemize}
Note that $e_{0,n}$, $e_{n,0}$, $K_{(0,1)}$ and $K_{(1,0)}$ generate $\gt$ thus the last set of formulas
define a homomorphism $\ev_u$.

The representation $\ev_a$ is called \textit{Fock representation} of $\gt$ evaluated at $a$. The parameter $a$ is called the evaluation parameter of Fock representation. Let us note here that by (\ref{f2}) the generators act in $\fock(a)$ as monomials in the evaluation parameter $a$:
\be
\label{alphaa}
\ev_a(\alpha^{w}_{k})\sim a^{-k n(w)}
\ee
\subsection{Tensor product of Fock representations \label{fockrmat}}
Let us denote
\be
\label{rfock}
\fock^{(r)}=\fock(a_1)\otimes \cdots \otimes  \fock(a_r)
\ee
and define representation of quantum toroidal algebra $\ev^{(r)}:\gt\to \textrm{End}\Big( \fock^{(r)} \Big)$ by:
$$
\ev^{(r)}(\alpha)=\ev_{a_1} \otimes \cdots \otimes \ev_{a_r} \Big( \Delta^{(r)} (\alpha)  \Big), \ \ \alpha\in \gt
$$
where $\Delta$ is a coproduct from Section \ref{hopfsec}. We set:
$$
\Rtot^{(r_1),(r_2)}=\ev^{(r_1)}\otimes \ev^{(r_2)} ( \Rtot )
$$
As shown in \cite{OS}, in the tensor product of Fock spaces, the infinite product (\ref{khtot}) converges in the topology of power series in $a_i$ and $\Rtot^{(r_1),(r_2)}$ is a  well defined element of $\textrm{End}\Big( \fock^{(r_1)} \otimes \fock^{(r_2)} \Big)$ whose matrix coefficients are rational functions of evaluation parameters $a_i$.
Let $\ev^{(r)}_{a}=\ev_{a_1 a} \otimes \cdots \otimes \ev_{a_r a}$ be a shifted evaluation.
Denote $\Rtot^{(r_1),(r_2)}(a)=\ev^{(r_1)}\otimes \ev^{(r_2)}_a ( \Rtot )$ and $R^{\pm,(r_1),(r_2)}_{w}(a)=\ev^{(r_1)}\otimes \ev^{(r_2)}_a ( R^{\pm }_{w} )$.
\begin{Proposition}(Section 2 of \cite{OS})

\label{Rlimit}
\noindent
\begin{itemize}
\item
The operator $R^{-,(r_1),(r_2)}_{0}\stackrel{def}{=}R^{-,(r_1),(r_2)}_{0}(a)$ does not depend on $a$.
\item
$R^{-,(r_1),(r_2)}_{w}(0)=1$ for $w>0$ and $R^{+,(r_1),(r_2)}_{w}(0)=1$ for $w<0$
\item
$
R^{(r_1),(r_2)}_{\infty}(0)=\hbar^{-\Omega}
$
where $\hbar^{\Omega}$ acts on $\fock^{(r_1)}_{(n_1)}\otimes \fock^{(r_2)}_{(n_2)}$ by multiplication on $\hbar^{(n_1 r_2 +n_2 r_1)/2}$.
\item
In particular, we have:
$
\Rtot^{(r_1),(r_2)}(0) =  (R^{-,(r_1),(r_2)}_{0})^{-1} \hbar^{\Omega}.
$
\end{itemize}
\end{Proposition}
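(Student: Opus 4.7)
The plan is to analyze each factor in the Khoroshkin--Tolstoy factorization (\ref{khtot}) of $\Rtot$ under the shifted evaluation $\ev^{(r_1)} \otimes \ev^{(r_2)}_a$, using the second infinite product form, which is the one whose factors admit well-defined $a \to 0$ limits (cf.\ the footnote after (\ref{khtot})). The key input is the homogeneity property (\ref{alphaa}), which I would first extend to $\ev^{(r)}_a(\alpha^w_k) = a^{-kn(w)}\,\ev^{(r)}(\alpha^w_k)$ on the tensor Fock representation (\ref{rfock}); this is essentially free, since the coproduct of $\alpha^w_k$ splits among the $r$ Fock factors while preserving the mode index $k$, and each individual factor scales by the expected power of $a$.

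The slope-zero bullet is then immediate: because $n(0) = 0$, the shifted evaluation $\ev^{(r_2)}_a$ agrees with $\ev^{(r_2)}$ on every generator $\alpha^0_k$, so $R^{-,(r_1),(r_2)}_0(a)$ is manifestly independent of $a$. For the intermediate slopes $w \neq 0, \infty$ I would fix a weight subspace $\fock^{(r_1)}_{(n_1)} \otimes \fock^{(r_2)}_{(n_2)}$; by local nilpotency of the positive modes, each exponential in (\ref{wallR}) truncates to a polynomial on that subspace, and the homogeneity above shows that every nontrivial term in $(R^-_w)^{-1}$ with $w > 0$ and in $(R^+_w)^{-1}$ with $w < 0$ carries a strictly positive power of $a$ once one passes to the second-infinite-product ordering. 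Matrix-element by matrix-element the limits are then $1$, which is the content of the two vanishing bullets.

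The slope-$\infty$ bullet is the main obstacle and requires a genuine computation. The generators $\alpha^\infty_k$ act diagonally in the Macdonald basis by (\ref{f2}), so $R^{(r_1),(r_2)}_\infty(a)$ is an exponential of a commuting family on each weight subspace. I would apply the standard Fock-space resummation of the form $\sum\limits_{k \geq 1}\frac{1}{k}(x^k - y^k)(u^k - v^k) = \log\frac{(1-xv)(1-yu)}{(1-xu)(1-yv)}$ to collapse the infinite exponent of (\ref{wallR}) at $w = \infty$ into a finite product of linear factors in the Macdonald-basis weights of the two Fock factors. Combining this with the normalization $n_k$ from Section 2.1 and the central eigenvalue from (\ref{fi}), and then keeping only the terms that survive as $a \to 0$, the surviving scalar on $\fock^{(r_1)}_{(n_1)} \otimes \fock^{(r_2)}_{(n_2)}$ collapses to $\hbar^{-(n_1 r_2 + n_2 r_1)/2} = \hbar^{-\Omega}$. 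The delicate point here is that one must track which pieces of the infinite product actually survive the limit as well-defined operators, rather than naively commuting limit with product.

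The final bullet is a bookkeeping step: assembling the previous three computations inside the second infinite product expression for $\Rtot$, all $(R^-_w)^{-1}$ with $w > 0$ and all $(R^+_w)^{-1}$ with $w < 0$ tend to $1$, the $(R_\infty)^{-1}$ factor contributes $\hbar^{\Omega}$, and $(R^-_0)^{-1}$ is unchanged, giving $\Rtot^{(r_1),(r_2)}(0) = (R^{-,(r_1),(r_2)}_0)^{-1}\,\hbar^{\Omega}$, as claimed.
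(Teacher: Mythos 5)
First, a point of comparison: the paper does not prove Proposition \ref{Rlimit} at all --- it is imported verbatim with a citation to Section 2 of \cite{OS} --- so your proposal is not competing with an in-paper argument but attempting to reconstruct one from the explicit formulas. Your overall architecture is the right one and matches what the cited source does: slope $0$ is $a$-independent because $n(0)=0$ (indeed by (\ref{f1}) the slope-zero generators carry no $a$ at all); the intermediate-slope factors truncate on each weight subspace by local nilpotency (and, since $\alpha^{w}_{\pm k}$ shifts the Fock grading by $k\,d(w)$, only finitely many slopes act nontrivially on a fixed graded piece, which is what justifies exchanging the limit with the infinite product --- a point worth stating explicitly); and the slope-$\infty$ factor, being diagonal in the Macdonald basis, must be resummed into a ratio of products of linear factors before the limit can be taken.

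That said, the proposal asserts rather than proves exactly the two claims that carry the content of the Proposition. First, for the intermediate slopes you claim that every nontrivial term of $(R^{-}_{w})^{-1}$ with $w>0$ and of $(R^{+}_{w})^{-1}$ with $w<0$ carries a strictly positive power of $a$ ``once one passes to the second-infinite-product ordering.'' Taken at face value, the scaling (\ref{alphaa}) assigns to the second tensor factor of $\alpha^{w}_{-k}\otimes\alpha^{w}_{k}$ the weight $a^{-kn(w)}$, which for $w>0$ and $k>0$ is a \emph{negative} power of $a$; so either the sign conventions in (\ref{alphaa}) and (\ref{f2}) must first be reconciled with the footnote describing the two orderings of (\ref{khtot}), or the claim fails for the ordering you chose. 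This is not a formality: the whole Proposition is a statement about which side of the line of slopes survives the limit, and the sign of $-kn(w)$ against the choice of $\pm$ in (\ref{wallR}) is the entire question; it needs to be checked, not inferred from the ordering. Second, the bullet $R^{(r_1),(r_2)}_{\infty}(0)=\hbar^{-\Omega}$ is the only genuinely computational one, and your text stops at ``the surviving scalar collapses to $\hbar^{-(n_1r_2+n_2r_1)/2}$'' without performing the resummation, substituting the eigenvalues from (\ref{f2}) and the normalization $n_k$, or exhibiting the cancellation of the divergent monomials between numerator and denominator that leaves a pure power of $\hbar$. As written, that bullet restates the claim rather than deriving it. Until these two points are carried out, the proposal is a correct plan but not yet a proof.
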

The universal $R$-matrix $\Rtot$ satisfies the quantum Yang-Baxter equation. Thus, its limit $\Rwal=\hbar^{-\Omega} R^{-}_{0}$
is also a solution of this equation. It is not difficult to see that $\Rwal$ is a universal $R$-matrix for $\fh$.

\subsection{Triangular and block-diagonal operators \label{defsec}}
Let us introduce some convenient notations and terminology here. The Fock space is equipped with the grading:
$$
\fock(a)=\bigoplus_{n=0}^{\infty}\, \fock_{(n)}(a)
$$
corresponding to the degree function $\deg(p_k)=k$. This induces the grading
on the tensor products:
$$
\fock^{(r)}=\bigoplus_{n=0}^{\infty}\, \fock^{(r)}_{(n)}
$$
where $\fock^{(r)}_{(n)}$ denotes the subspace spanned by elements of degree $n$. Throughout this paper we use the following terminology: an operator $A : \fock^{(r_1)}\otimes  \fock^{(r_2)} \to \fock^{(r_1)}\otimes  \fock^{(r_2)} $ is \textit{lower-triangular} (upper-triangular)  if $A=\bigoplus\limits_{k=0}^{\infty} A_{(k)}$  (respectively $A=\bigoplus\limits_{k=0}^{\infty} A_{(-k)}$)  where $A_{(k)}:\fock^{(r_1)}_{(n_1)}\otimes  \fock^{(r_2)}_{(n_2)} \to \fock^{(r_1)}_{(n_1+k)}\otimes  \fock^{(r_2)}_{n_1-k}$.
We say that the operator $A$ is \textit{strictly} lower or upper-triangular if in addition $A_{(0)}=1$. We say the operator is block-diagonal if $A_{(k)}=0$ for $k\neq 0$. For example, the operators $R^{-}_w$ and $R^{+}_{w}$ defined  by (\ref{wallR})
are strictly lower and upper-triangular respectively.
  Finally, for a formal variable $z$ we denote a block-diagonal operator $z^{d}_{(i)}$ acting on
a $\fock^{(r_1)}_{(n_1)}\otimes  \fock^{(r_2)}_{(n_2)}$ as multiplication by $z^{n_i}$.

\subsection{Geometric realization of Fock representations}
In the special case $r=1$ the instanton moduli space coincides with the Hilbert scheme of $n$ points on the complex plane $\cM(n,1)=\textrm{Hilb}^{n}(\C^2)$.  As a vector space the equivariant $K$-theory of the Hilbert scheme of points is isomorphic to the Fock space:
\be\label{fc}
\bigoplus\limits_{n=0}^{\infty}\, K_{\bT}(\textrm{Hilb}^{n}(\C^2)  )= \fock(a)
\ee
where $\bT$ is a torus from Section \ref{instantsec}. The summands here correspond to the grading
from the previous section $K_{\bT}(\textrm{Hilb}^{n}(\C^2)  )=\fock_{(n)}(a)$.
The parameters $t_1, t_2$ and $a$ of the Fock space are identified with
the corresponding equivariant parameters of $\bT$.

The fixed point set $\textrm{Hilb}^{n}(\C^2)^{\bT}$ is discrete.
Its elements are labeled by partitions $\nu$ with  $|\nu|=n$. The structure sheaves of the fixed points ${\cal{O}}_{\nu}$ form a basis of the localized $K$-theory. The polynomials representing the elements of this basis
under isomorphism (\ref{fc}) are the Macdonald polynomials $P_{\nu}$ in Haiman normalization \cite{Haim}.

In \cite{SchifVas} the geometric action of $\gt$ on the equivariant $K$-theory of $\textrm{Hilb}^{n}(\C^2)$ was constructed.
As a representation, this action coincides with the Fock representation described in Section \ref{focks}. In particular, formulas (\ref{fi})-(\ref{f2}) describe this geometric action explicitly in the basis of the fixed points.

\subsection{Coproducts and stable envelopes \label{coprs}}
In \cite{Neg,NegFlags} the geometric action of $\gt$ on $K_{\bT}(\cM(r))$ was constructed. As a representation $K_{\bT}(\cM(r))$ is isomorphic to a product of $r$ Fock spaces $\fock^{(r)}$. The evaluation parameters $a_i$ of this representation are identified with
the equivariant parameters of framing torus $\bA$. An alternative construction of $\gt$ action on equivariant $K$-theory
was given in \cite{OS} using  FRT formalism \cite{FRT}. The key ingredient of \cite{OS} is the notion of the stable map,
which is a geometric description of the coproduct. Here we recall basic facts about the stable map, the details can be found in Section 2 of \cite{OS}.

Assume that a one-dimensional subtorus $\bC\subset \bA$ splits the framing as $r$ to $r_1+a r_2$ such that
$\cM(r)^{\bC}=\cM(r_1)\times\cM(r_2)$. In this case we have a canonical isomorphism of the corresponding $K$-theories, called \textit{stable envelope}\footnote{To be precise, the definition of stable envelope map requires a choice of a chamber
in  $\fC \subset Lie(\bC)$ and an alcove $\nabla\subset Pic(\cM(n,r))\otimes \Q$.
In this paper the chamber $\fC$ corresponds to $a\to 0$
and $\nabla$ unique alcove lying in the opposite of the ample cone whose closure contains $0\in H^{2}(\cM(n,r),\R)$. This is the same choice as in Theorem 3 in \cite{OS}. The other choices of alcove $\nabla$ correspond to a freedom in the choice of the Hopf structure, i.e., the antipode $\Delta$ for $\gt$. }:
$$
K_{\bT}(\cM(r)^{\C^*})=K_{\bT}(\cM(r_1))\otimes K_{\bT}(\cM(r_1)) \stackrel{\Stab}{\longrightarrow} K_{\bT}(\cM(r))
$$
The stable map $\Stab$ and the antipode $\Delta$ make the following diagram commutative:
\[
\xymatrix{
 K_{\bT}(\cM(r_1))\otimes K_{\bT}(\cM(r_1)) \ar[d]^{\Delta(\alpha)}\ar[rr]^{\ \ \ \ \ \Stab} & & K_{\bT}(\cM(r))  \ar[d]^{\alpha}\\
 K_{\bT}(\cM(r_1))\otimes K_{\bT}(\cM(r_1)) \ar[rr]^{ \ \ \ \ \Stab} && K_{\bT}(\cM(r))
}
\]
for every element $\alpha \in \gt$.

Let us consider a chain of splittings by whole $\bA$, such that all factors $r_i=1$ in the end:
\be
\label{splt}
r \to a_1 r_1+a_2 r_2 \to a_1 r_1+a_2 r_2 +a_3 r_3 \to\cdots \to  a_1+\cdots+a_r
\ee
This gives a canonical isomorphism of $\gt$-modules:
\be
\label{isof}
K_{\bT}(\cM(1))\otimes\cdots \otimes K_{\bT}(\cM(1)) =\fock^{(r)} \stackrel {\Phi^{(r)}} \longrightarrow K_{\bT}(\cM(r))
\ee
where $\fock^{(r)}$ is defined by (\ref{rfock}), and $\Phi^{(r)}$ is a composition of the stable envelopes corresponding to splittings.
By construction of stable envelope, $\Phi^{(r)}$ does not depend on the order of splittings in (\ref{splt}) and thus it is well defined.
This property reflects the coassociativity of coproduct.



\section{Quantum difference equations \label{qdedefsec}}
\subsection{The quantum difference operator \label{qdeopsec}}
First let us consider an element $B(z)\in \gt[[z,q]]$ given explicitly by:
$$
B(z)=\prod\limits_{{w \in \Q} \atop { -1\leq w<0 }}^{\leftarrow}\!\!\!:\!\exp\Big( \sum\limits_{k=0}^{\infty} \dfrac{n_k \,\hbar^{- k r d(w)/2}}{1-z^{-k d(w)}q^{k n(w)} \hbar^{-k r d(w)/2} }\,  \alpha^{w}_{-k} \alpha^{w}_{k} \Big)\!\!:.
$$
The symbol $::$ here denotes the normal ordered exponent - all annihilation operators $\alpha^{w}_{k}$ with $k>0$ are moved to the right. We identify $K_{\bT}(\cM(r))$ with representation $\fock^{(r)}$ of $\gt$ through isomorphism  $\Phi^{(r)}$.
Define the following operator acting on $K_{\bT}(\cM(r))$ :
\be
\gM^{(r)}(z)={\cal{O}}(1)\, \ev_{a_1}\otimes\cdots\otimes \ev_{a_1} \Big( \Delta^{\otimes r} (B(z))   \Big)
\ee
where ${\cal{O}}(1)$ is the operator of multiplication by the corresponding line bundle in $K$-theory. Note that the coefficients
 of normally ordered exponentials act in  $\fock^{(r)}$ as a locally nilpotent operators. This means that the operators
$\gM^{(r)}(z)$ has rational matrix coefficients for all $r$. The qde for the rank $r$ instanton moduli space is given by the following equation \cite{OS}:
\be
\label{qde}
\Psi^{(r)}(z q) {\cal{O}}(1)= \M^{(r)}(z) \Psi^{(r)}(z).
\ee
The capping operator in quantum $K$-theory is a fundamental solution of qde, i.e., corresponds to the following boundary condition\footnote{Here $1 \in K_{\bG}(\cM(n,r))$ stands for the class of the structure sheaf ${\cal{O}}_{\cM(n,r)}$. }:
\be
\label{bc}
\Psi^{(r)}(0)=1
\ee
which geometrically means that the classical limit $z=0$ of the capped descendent vertex coincides with the corresponding
$K$-theory class.

\subsection{Cocycle identity}
Out goal to to compare the capping operator $\Psi^{(r)}(z)$ for $\cM(r)$ with
the (shifted) capping operator for the $\bC$-fixed point set given by
$\Psi^{(r_1)}(z \hbar^{\frac{r_2}{2}})\otimes \Psi^{(r_2)}(z \hbar^{-\frac{r_1}{2}})$. By definition, the first operator is the fundamental solution of (\ref{qde}) while the second solves
$$\begin{array}{l}
\Psi^{(r_1)} (z  \hbar^{\frac{r_2}{2}} q) \otimes \Psi^{(r_2)} (z \hbar^{-\frac{r_1}{2}} q) {\cal{O}}(1) = \\
 \\ \gM^{(r_1)}(z \hbar^{\frac{r_2}{2}}) \otimes \gM^{(r_2)} (z \hbar^{-\frac{r_1}{2}})
\Psi^{(r_1)} (z \hbar^{\frac{r_2}{2}}) \otimes \Psi^{(r_2)} (z \hbar^{-\frac{r_1}{2}})
\end{array}
$$
We conclude that the operator:
\be
\label{defJ}
\gJ^{(r_1),(r_2)}(z)= \Psi^{(r)} (z) \Big(\Psi^{(r_1)}(z \hbar^{\frac{r_2}{2}}) \otimes \Psi^{(r_2)} (z \hbar^{-\frac{r_1}{2}} )\Big)^{-1}
\ee
is the solution of the following difference equation:
\be
\label{spt}
\textbf{J}^{(r_1),(r_2)}(z q)  \gM^{(r_1)}(z \hbar^{\frac{r_2}{2}} ) \otimes \gM^{(r_2)} (z \hbar^{-\frac{r_1}{2}}) = \gM^{(r)}(z) \gJ^{(r_1),(r_2)}(z)
\ee
\begin{Proposition}
The operator $\gJ^{(r_1),(r_2)}(z)$ satisfies the dynamical cocycle identity:
\be
\label{dci}
\gJ^{(r_1+r_2),(r_3)}(z) \gJ^{(r_1),(r_2)}(z \hbar^{\frac{r_3}{2}})=\gJ^{(r_1),(r_2+r_3)}(z) \gJ^{(r_2),(r_3)}(z \hbar^{-\frac{r_1}{2}})
\ee
\end{Proposition}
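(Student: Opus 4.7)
The plan is to prove the cocycle identity by a direct computation that essentially amounts to reassociating a triple splitting of $r_1+r_2+r_3$. By rearranging the defining equation (\ref{defJ}), one has the ``telescoping'' identity
\[
\Psi^{(r_1+r_2)}(z) \;=\; \gJ^{(r_1),(r_2)}(z) \,\bigl( \Psi^{(r_1)}(z \hbar^{r_2/2}) \otimes \Psi^{(r_2)}(z \hbar^{-r_1/2}) \bigr),
\]
and the strategy is to expand $\Psi^{(r_1+r_2+r_3)}(z)$ in two different ways using this relation: once via the splitting $(r_1+r_2)+r_3$ followed by $r_1+r_2$, and once via $r_1+(r_2+r_3)$ followed by $r_2+r_3$.

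First I would apply the telescoping identity for the splitting $(r_1+r_2)+r_3$ to obtain
\[
\Psi^{(r_1+r_2+r_3)}(z) = \gJ^{(r_1+r_2),(r_3)}(z) \,\bigl( \Psi^{(r_1+r_2)}(z \hbar^{r_3/2}) \otimes \Psi^{(r_3)}(z \hbar^{-(r_1+r_2)/2}) \bigr),
\]
and then further expand $\Psi^{(r_1+r_2)}(z \hbar^{r_3/2})$ via the splitting $r_1+r_2$, which introduces a factor of $\gJ^{(r_1),(r_2)}(z\hbar^{r_3/2})\otimes 1_3$ together with the triple tensor product
\[
\Psi^{(r_1)}(z \hbar^{(r_2+r_3)/2}) \otimes \Psi^{(r_2)}(z \hbar^{(r_3-r_1)/2}) \otimes \Psi^{(r_3)}(z \hbar^{-(r_1+r_2)/2}).
\]
Repeating the same procedure via the other associativity (first $r_1+(r_2+r_3)$, then $r_2+r_3$), the shift bookkeeping gives exactly the same triple tensor product, now multiplied by $\gJ^{(r_1),(r_2+r_3)}(z)\,(1_1\otimes \gJ^{(r_2),(r_3)}(z\hbar^{-r_1/2}))$ on the left.

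The triple tensor of $\Psi^{(r_i)}$'s is invertible as a formal power series in $z$ (by the boundary condition $\Psi^{(r_i)}(0)=1$ from (\ref{bc})), so cancelling it from both equalities yields (\ref{dci}). The only nontrivial check in the argument is the shift bookkeeping: verifying that $\hbar^{r_3/2}\cdot \hbar^{r_2/2}=\hbar^{(r_2+r_3)/2}$ for the $\Psi^{(r_1)}$ factor and $\hbar^{r_3/2}\cdot\hbar^{-r_1/2}=\hbar^{-r_1/2}\cdot\hbar^{r_3/2}=\hbar^{(r_3-r_1)/2}$ for the $\Psi^{(r_2)}$ factor, and analogously for $\Psi^{(r_3)}$. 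These match precisely because the shift conventions in the definition of $\gJ^{(r_1),(r_2)}(z)$ are compatible with the action of the Cartan element $\hbar^{\Omega}$ computed in Proposition \ref{Rlimit}, which is exactly what makes the nested fundamental solutions assemble associatively.

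I expect no serious obstacle: the proof is formal manipulation with the defining identity and tracking shifts. An alternative, essentially equivalent route is to verify that both sides of (\ref{dci}) satisfy the same $q$-difference equation (derived from iterating (\ref{spt})) and agree at $z=0$, but the telescoping proof is more direct and makes the associative origin of the cocycle identity manifest.
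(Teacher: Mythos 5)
Your proof is correct and follows essentially the same route as the paper: the paper's own (very terse) argument is precisely to compare the two chains of splittings $(r_1+r_2)+r_3$ and $r_1+(r_2+r_3)$ and observe that the identity follows from the definition (\ref{defJ}); you have simply written out the telescoping and the shift bookkeeping that the paper leaves implicit. The explicit verification that both associativity orders produce the same triple tensor product $\Psi^{(r_1)}(z \hbar^{(r_2+r_3)/2}) \otimes \Psi^{(r_2)}(z \hbar^{(r_3-r_1)/2}) \otimes \Psi^{(r_3)}(z \hbar^{-(r_1+r_2)/2})$, which is invertible by the boundary condition (\ref{bc}), is exactly the content the paper summarizes as ``the result does not depend on the choice of the order.''
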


\begin{proof}
Consider the chain of splittings:
$$\cM{(r_1+r_2+r_3)} \to \cM{(r_1+r_2)} \times \cM{(r_3)} \to \cM{(r_1)}\times\cM{(r_2)}\times \cM{(r_3)}$$
or, alternatively:
$$
\cM{(r_1+r_2+r_3)} \to \cM{(r_1)} \times \cM{(r_2+r_3)} \to \cM{(r_1)}\times\cM{(r_2)}\times \cM{(r_3)}
$$
The result does not depend on the choice of the order and the proposition follows from definition of $\gJ^{(r_1),(r_2)}(z)$.
\end{proof}
By definition, $\Psi^{(r)}(z)$ is a power series in $z$ whose coefficients are rational functions of $a$.
As we discussed in \cite{OS} the limit $\lim\limits_{a\to 0} \Psi^{(r)}(z)$ exists and is lower-triangular. The power series
$\Psi^{(r_1)} (z \hbar^{\frac{r_2}{2}}) \otimes \Psi^{(r_2)} (z \hbar^{-\frac{r_1}{2}})$ is independent of $a$. We conclude:
\begin{Corollary}
\label{clrc}
The operator
\be
\label{Yop}
Y^{(r_1),(r_2)}(z)=\lim\limits_{a\to 0} \gJ^{(r_1),(r_2)}(z)
\ee is a lower-triangular operator satisfying the dynamical cocycle identity (\ref{dci}).
\end{Corollary}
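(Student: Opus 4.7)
The plan is to decompose the proof into three pieces: existence of the limit, lower-triangularity, and the cocycle identity.

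\textbf{Existence and lower-triangularity.} I would start from the definition
\[
\gJ^{(r_1),(r_2)}(z) \;=\; \Psi^{(r)}(z)\,\Bigl(\Psi^{(r_1)}(z\hbar^{r_2/2})\otimes \Psi^{(r_2)}(z\hbar^{-r_1/2})\Bigr)^{-1}
\]
and analyze each factor. The first factor $\Psi^{(r)}(z)$ has coefficients rational in $a$, and, as recorded just above the corollary (following \cite{OS}), its limit as $a\to 0$ exists and is lower-triangular with respect to the grading on $\fock^{(r)}=\fock^{(r_1)}\otimes\fock^{(r_2)}$ from Section \ref{defsec}. The second factor is manifestly independent of $a$, because each $\Psi^{(r_i)}$ is attached to $\cM(r_i)$, whose equivariant parameters do not include the new splitting coordinate $a$. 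Hence the limit $Y^{(r_1),(r_2)}(z)$ exists coefficient-wise in $z$. For lower-triangularity, the boundary condition (\ref{bc}) guarantees that each $\Psi^{(r_i)}(z)=1+O(z)$, and the $z$-dependent corrections in the qde (\ref{qde}) preserve the claimed triangular structure; thus the tensor product and its inverse are lower-triangular, and a product of lower-triangular operators is lower-triangular.

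\textbf{Cocycle identity.} For (\ref{dci}) I would appeal to the previous proposition and then pass to a double limit. The four $\gJ$-factors on both sides of (\ref{dci}) are defined in the geometric setting of a chain of splittings $r_1+r_2+r_3 \to (r_1+r_2)+r_3 \to r_1+r_2+r_3$ (and its symmetric counterpart), and the proposition asserts that the identity is already valid at the level of $\gJ$'s, with coefficients in $\Q(a',a'')$ for the two independent splitting parameters $a'$ (separating $r_3$) and $a''$ (separating $r_2$ inside $r_1+r_2$). Applying the iterated limit $a',a''\to 0$ to both sides, each $\gJ$ converges coefficient-wise in $z$ to the corresponding $Y$, by the first part applied to each splitting. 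Passing this limit through the product of matrices with coefficients that are power series in $z$ then yields exactly (\ref{dci}) for the $Y$-operators.

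\textbf{Main obstacle.} The only delicate point is justifying the interchange of the two parameter limits and the product of operators. Because everything is rational in $a',a''$ with regular limits, and because the products we take are block-diagonal in each fixed $z$-degree (finite-dimensional on each piece) thanks to the lower-triangularity established in the first paragraph, the interchange is automatic. Thus the Corollary reduces to the proposition together with the recorded limiting behavior of $\Psi^{(r)}(z)$, with no further geometric input required.
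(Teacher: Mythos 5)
Your argument is correct and is essentially the paper's own: the corollary is deduced from the preceding proposition (the cocycle identity already holding for the $\gJ$'s) together with the recorded facts that $\lim_{a\to 0}\Psi^{(r)}(z)$ exists and is lower-triangular while $\Psi^{(r_1)}(z\hbar^{r_2/2})\otimes\Psi^{(r_2)}(z\hbar^{-r_1/2})$ is independent of $a$. One small correction: the triangularity of the tensor factor has nothing to do with the boundary condition $\Psi(0)=1$ or the $z$-expansion --- it is block-diagonal (hence trivially lower-triangular, with block-diagonal inverse) simply because each $\Psi^{(r_i)}$ preserves the instanton-number grading $\fock^{(r_i)}_{(n_i)}=K_{\bT}(\cM(n_i,r_i))$, which is the grading relevant to the notion of triangularity in Section \ref{defsec}.
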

The difference operators $\gM^{(r)}(z)$ are given by evaluation of a universal elements from $\gt[[z,q]]$ in the Fock representation. It means that the solution $\gJ^{(r_1),(r_2)}(z)$ and $Y^{(r_1),(r_2)}(z)$ are also given by evaluation of some universal elements. In particular $Y^{(r_1),(r_2)}(z)=\ev^{(r_1)}\otimes \ev^{(r_2)}( Y(z)) $ for some universal $Y(z)$. In the following sections we find $Y(z)$ explicitly.

Finally, let us note here that by definition (\ref{defJ}) we have $\gJ^{(r),(0)}(z)=\gJ^{(0),(r)}(z)=1$
and thus:
\be
\label{Yz}
Y^{(r),(0)}(z)=Y^{(0),(r)}(z)=1, \ \ \forall r.
\ee

\subsection{Shift operators}
 The global dependence of the capping operator $\Psi^{(r)}(z)$ on the equivariant parameter $a$ is governed by certain $q$-difference equation dual to qde, the so called quantum Knizhnik-Zamolodchikov equation, \cite{OS} (Section 4.2):
\be
\label{qkz}
\Psi^{(r)}(z,a q) \textsf{E}(z,a) = \textsf{S}(z,a) \Psi^{(r)}(z,a)
\ee
Moreover, under identification (\ref{isof}) the operators are explicitly expressible through
the universal $R$-matrices  from Section \ref{fockrmat}:
$$
\textsf{S}(z,a)=z_{(1)}^{d}  \Rtot^{(r_1),(r_2)}(a), \ \ \  \textsf{E}(z,a)=z_{(1)}^{d} \Big(R_{\infty}^{(r_1),(r_2)}(a)\Big).
$$
\begin{Proposition}
The operator $\Psi^{(r)}(z,0)$ is a lower-triangular solution of wall Knizhnik-Zamolodchikov (wKZ) equation:
\be
\label{wqKZ}
z_{(1)}^{d}  (R^{-,(r_1),(r_2)}_{0})^{-1} \hbar^{\Omega} \Psi^{(r)}(z,0) = \Psi^{(r)}(z,0) \hbar^{\Omega} z^{d}_{(1)}
\ee
\end{Proposition}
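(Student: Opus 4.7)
The plan is to derive the wKZ equation by specializing the quantum Knizhnik--Zamolodchikov equation (\ref{qkz}) at $a=0$, using the explicit degenerations of the universal $R$-matrix collected in Proposition \ref{Rlimit}.

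First I would observe that both operators $\textsf{S}(z,a)=z_{(1)}^{d}\,\Rtot^{(r_1),(r_2)}(a)$ and $\textsf{E}(z,a)=z_{(1)}^{d}\,R_{\infty}^{(r_1),(r_2)}(a)$ have well-defined $a\to 0$ limits by Proposition \ref{Rlimit}, namely
$$\textsf{S}(z,0)=z_{(1)}^{d}\,(R^{-,(r_1),(r_2)}_{0})^{-1}\,\hbar^{\Omega}, \qquad \textsf{E}(z,0)=z_{(1)}^{d}\,\hbar^{-\Omega}.$$
Since the matrix coefficients of $\Psi^{(r)}(z,a)$ are rational in $a$ and admit a finite value at $a=0$ (as recalled just before Corollary \ref{clrc}), both $\Psi^{(r)}(z,a)$ and $\Psi^{(r)}(z,aq)$ converge to the same operator $\Psi^{(r)}(z,0)$ as $a\to 0$. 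Passing to the limit in (\ref{qkz}) therefore gives the identity $\Psi^{(r)}(z,0)\,\textsf{E}(z,0) = \textsf{S}(z,0)\,\Psi^{(r)}(z,0)$, and since $z_{(1)}^{d}$ and $\hbar^{\pm\Omega}$ are both block-diagonal on $\fock^{(r_1)}\otimes\fock^{(r_2)}$, hence commute with each other, this rearranges into the form stated in the proposition.

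For the lower-triangularity half of the statement I would invoke the analysis of \cite{OS} summarized just before Corollary \ref{clrc}: when one solves the qde order-by-order in $z$, the blocks of $\Psi^{(r)}(z,a)$ which would raise the degree in the second tensor factor carry strictly positive powers of $a$ and therefore vanish at $a=0$. A complementary consistency check is furnished by the factorization Theorem \ref{facth}, which identifies the limit with $Y^{(r_1),(r_2)}(z)\cdot\bigl(\Psi^{(r_1)}(z\hbar^{r_2/2})\otimes \Psi^{(r_2)}(z\hbar^{-r_1/2})\bigr)$; the first factor is lower-triangular by Corollary \ref{clrc} and the second is block-diagonal, so the product is lower-triangular in the sense of Section \ref{defsec}.

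The main obstacle I anticipate is justifying the entry-by-entry passage to the limit in the $q$-difference equation (\ref{qkz}): a priori this equation is formulated for generic $a$, and one needs to know that no matrix coefficient of $\textsf{S}$, $\textsf{E}$ or $\Psi^{(r)}$ develops a pole at $a=0$ before specialization. This regularity is precisely the content of the convergence statement recalled in Section \ref{fockrmat} together with the $a\to 0$ regularity of $\Psi^{(r)}$ established in \cite{OS}, so once these inputs are cited the argument reduces to an essentially formal calculation.
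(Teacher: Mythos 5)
Your proposal is correct and follows essentially the same route as the paper: the paper's own proof consists of two sentences, citing \cite{OS} for the existence of the limit $\Psi^{(r)}(z,0)$ and Proposition \ref{Rlimit} for the degeneration of $\textsf{S}(z,a)$ and $\textsf{E}(z,a)$ at $a=0$, which is precisely the substitution you carry out in detail. One caveat: with the formulas as literally printed, the $a\to 0$ limit of (\ref{qkz}) produces $\Psi^{(r)}(z,0)\,\hbar^{-\Omega}z_{(1)}^{d}$ rather than $\Psi^{(r)}(z,0)\,\hbar^{\Omega}z_{(1)}^{d}$ on the right-hand side (a discrepancy by $\hbar^{2\Omega}$), which appears to be a typographical inconsistency in the paper's definition of $\textsf{E}$ or of $R_{\infty}(0)$ rather than a gap in your argument, but your phrase ``this rearranges into the form stated'' silently absorbs it.
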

\begin{proof}
The existence of a limit $\Psi^{(r)}(z,0)$ was shown in \cite{OS}. The proof follows from
Proposition \ref{Rlimit}.
\end{proof}
Note that the operator $\hbar^{\Omega} z^{d}_{(1)}$  commutes with any block diagonal-operator,
such as for example $\Psi^{(r_1)}(z \hbar^{r_2/2}) \otimes \Psi^{(r_2)} (z \hbar^{-r_1/2})$. Thus, we conclude:
\begin{Corollary} \label{Yqkz}
The operator $Y^{(r_1),(r_2)}(z)$ defined by (\ref{Yop}) is a lower-triangular solution of wKZ equation:
\be
z_{(1)}^{d}  (R^{-,(r_1),(r_2)}_{0})^{-1} \hbar^{\Omega} Y^{(r_1),(r_2)}(z) = Y^{(r_1),(r_2)}(z) \hbar^{\Omega} z^{d}_{(1)}
\ee
\end{Corollary}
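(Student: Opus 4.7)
The plan is to derive the wKZ equation for $Y^{(r_1),(r_2)}(z)$ directly from the wKZ equation for $\Psi^{(r)}(z,0)$ established in the preceding Proposition, by exploiting the factorization $\Psi^{(r)}(z,0) = Y^{(r_1),(r_2)}(z)\cdot \Pi(z)$, where
\[
\Pi(z) = \Psi^{(r_1)}(z \hbar^{r_2/2}) \otimes \Psi^{(r_2)}(z \hbar^{-r_1/2}).
\]
This factorization is simply the rearrangement of the definition (\ref{defJ}) passed to the $a \to 0$ limit (using (\ref{Yop}) and the fact that $\Pi(z)$ is independent of $a$).

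First I would substitute this factorization into the wKZ equation of the Proposition, obtaining
\[
z_{(1)}^{d}(R^{-,(r_1),(r_2)}_{0})^{-1} \hbar^{\Omega}\, Y^{(r_1),(r_2)}(z)\, \Pi(z) \;=\; Y^{(r_1),(r_2)}(z)\, \Pi(z)\, \hbar^{\Omega} z^{d}_{(1)}.
\]
Next, I would invoke the observation made just before the Corollary: the operator $\hbar^{\Omega} z^{d}_{(1)}$ commutes with any block-diagonal operator on $\fock^{(r_1)}\otimes \fock^{(r_2)}$, and $\Pi(z)$ is manifestly block-diagonal because each tensor factor $\Psi^{(r_i)}(\,\cdot\,)$ preserves the grading on its own $\fock^{(r_i)}$. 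Hence $\Pi(z)\,\hbar^{\Omega} z^{d}_{(1)} = \hbar^{\Omega} z^{d}_{(1)}\, \Pi(z)$, and canceling $\Pi(z)$ on the right of both sides yields precisely the desired wKZ relation
\[
z_{(1)}^{d} (R^{-,(r_1),(r_2)}_{0})^{-1} \hbar^{\Omega}\, Y^{(r_1),(r_2)}(z) \;=\; Y^{(r_1),(r_2)}(z)\, \hbar^{\Omega} z^{d}_{(1)}.
\]

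Finally, the lower-triangularity of $Y^{(r_1),(r_2)}(z)$ was already recorded in Corollary~\ref{clrc}, so nothing additional is required there. The only step where one needs to be slightly careful is justifying that $\Pi(z)$ is invertible so that the cancellation is legitimate; this is immediate from the boundary condition (\ref{bc}), which guarantees $\Pi(0) = 1$ and thus invertibility at the level of formal power series in $z$. I do not anticipate a serious obstacle: the proof is essentially a one-line algebraic manipulation once one recognizes that the tensor product of two capping operators is block-diagonal with respect to the bigrading on $\fock^{(r_1)}\otimes \fock^{(r_2)}$.
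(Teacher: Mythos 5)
Your proof is correct and follows essentially the same route as the paper: the paper also observes that $\hbar^{\Omega}z^{d}_{(1)}$ commutes with the block-diagonal operator $\Psi^{(r_1)}(z\hbar^{r_2/2})\otimes\Psi^{(r_2)}(z\hbar^{-r_1/2})$ and deduces the corollary directly from the wKZ equation for $\Psi^{(r)}(z,0)$ via the factorization coming from (\ref{defJ}) and (\ref{Yop}). Your added remarks on the invertibility of $\Pi(z)$ and the source of lower-triangularity (Corollary \ref{clrc}) are accurate details the paper leaves implicit.
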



\subsection{Uniqueness of solutions of wKZ equation}
\begin{Lemma} \label{uniq}
For a given block diagonal operator $D^{(r_1),(r_2)}$ there exists unique lower triangular solution $J^{(r_1),(r_2)}$ of wKZ:
\be
\label{wqKZ}
z_{(1)}^{d}  (R^{-,(r_1),(r_2)}_{0})^{-1} \hbar^{\Omega} J^{(r_1),(r_2)} = J^{(r_1),(r_2)} \hbar^{\Omega} z^{d}_{(1)}
\ee
and diagonal part $J^{(r_1),(r_2)}_{(0)}=D^{(r_1),(r_2)}$.
\end{Lemma}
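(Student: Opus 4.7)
The plan is to peel the wKZ equation off grade by grade in the lower-triangular decomposition of Section \ref{defsec} and to derive an explicit recursion that determines each homogeneous component $J^{(r_1),(r_2)}_{(k)}$ uniquely in terms of $D^{(r_1),(r_2)}$ and the lower components $J^{(r_1),(r_2)}_{(0)},\ldots,J^{(r_1),(r_2)}_{(k-1)}$.

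First I would write
$J^{(r_1),(r_2)}=\sum_{k\geq 0} J^{(r_1),(r_2)}_{(k)}$ with $J^{(r_1),(r_2)}_{(0)}=D^{(r_1),(r_2)}$, and expand $(R^{-,(r_1),(r_2)}_0)^{-1}=1+\sum_{m\geq 1}\tilde R_{(m)}$ as a strictly lower-triangular series (by Proposition \ref{Rlimit} this inverse is well-defined and strictly lower-triangular). Applied to a vector in $\fock^{(r_1)}_{(n_1)}\otimes\fock^{(r_2)}_{(n_2)}$, the block-diagonal operators act by the scalars
$$
z^{d}_{(1)}\big|_{(n_1,n_2)}=z^{n_1},\qquad \hbar^{\Omega}\big|_{(n_1,n_2)}=\hbar^{(n_1 r_2+n_2 r_1)/2},
$$
and a degree-$k'$ shift commutes past $\hbar^{\Omega}$ up to the extra phase $\hbar^{k'(r_2-r_1)/2}$. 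Projecting the two sides of the wKZ equation onto $\fock^{(r_1)}_{(n_1+l)}\otimes\fock^{(r_2)}_{(n_2-l)}$ and cancelling the common factor $z^{n_1}\hbar^{(n_1 r_2+n_2 r_1)/2}$ yields
$$
\Bigl(1-z^{l}\hbar^{l(r_2-r_1)/2}\Bigr)\,J^{(r_1),(r_2)}_{(l)}
\;=\;\sum_{m=1}^{l} z^{l}\,\hbar^{(l-m)(r_2-r_1)/2}\,\tilde R_{(m)}\,J^{(r_1),(r_2)}_{(l-m)}.
$$

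For $l=0$ this reads $0=0$, so $J^{(r_1),(r_2)}_{(0)}$ is a free parameter that we set equal to $D^{(r_1),(r_2)}$. For $l\geq 1$ the scalar $1-z^{l}\hbar^{l(r_2-r_1)/2}$ is a nonzero element of the ground ring (it does not vanish identically in $z$), hence invertible, and the displayed equation uniquely solves for $J^{(r_1),(r_2)}_{(l)}$ in terms of $\tilde R_{(1)},\ldots,\tilde R_{(l)}$ and $J^{(r_1),(r_2)}_{(0)},\ldots,J^{(r_1),(r_2)}_{(l-1)}$. This simultaneously proves \emph{uniqueness} (the recursion is forced) and \emph{existence} (defining $J^{(r_1),(r_2)}_{(l)}$ by the same formula inductively produces a lower-triangular operator satisfying wKZ with prescribed diagonal block $D^{(r_1),(r_2)}$).

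The only real subtlety is the bookkeeping of the gradings and of the $\hbar$-phases produced by commuting $\hbar^{\Omega}$ past degree-$k$ shifts, together with the verification that the prefactor $1-z^{l}\hbar^{l(r_2-r_1)/2}$ is genuinely invertible in the relevant ring of formal power series in $z$ with coefficients rational in the equivariant parameters. Once Section \ref{defsec} is applied correctly these points are straightforward and the lemma follows directly from the grade-by-grade recursion.
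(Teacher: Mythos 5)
Your proof is correct and follows essentially the same route as the paper: both arguments project the wKZ equation onto the graded components, use the strict lower-triangularity of $(R^{-,(r_1),(r_2)}_{0})^{-1}$ to obtain a recursion expressing $J^{(r_1),(r_2)}_{(l)}$ through lower components, and invoke the invertibility of $1-z^{l}(\cdots)$ for $l\geq 1$ to solve it uniquely down to the free degree-zero block $D^{(r_1),(r_2)}$. Your version merely makes the scalar phases of $z^{d}_{(1)}$ and $\hbar^{\Omega}$ explicit, which is a harmless (and correct) elaboration of the paper's $\mathrm{Ad}$-formulation.
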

\begin{proof}
Write the last equation in the form:
$$
Ad_{\hbar^{-\Omega} z^{-d}_{(1)}} (J^{(r_1),(r_2)}) = \tilde{R} J^{(r_1),(r_2)}
$$
with $\tilde{R}=\hbar^{-\Omega} (R^{-,(r_1),(r_2)}_{0})^{-1} \hbar^{\Omega}$.   The operator $\tilde{R}$ is strictly lower triangular, thus, taking the $n$-th component of this equation we obtain:
$$
Ad_{\hbar^{\Omega} z^{-d}_{1}} (J^{(r_1),(r_2)}_{(n)}) = J^{(r_1),(r_2)}_{(n)}+\cdots
$$
where dots stand for terms $J^{(r_1),(r_2)}_{(k)}$ with $k<n$. As the operator $1-Ad_{\hbar^{\Omega} z^{-d}_{1}}$ is invertible for general $z$
we can solve this linear equation for $J^{(r_1),(r_2)}_{(n)}$ recursively through the lower terms $J^{(r_1),(r_2)}_{(k)}$ with $k<n$. By induction, all terms are thus expressed through the lowest term $J^{(r_1),(r_2)}_{(0)}=D^{(r_1),(r_2)}$.
\end{proof}
\begin{Corollary}
The solution of wKZ with $J^{(r_1),(r_2)}_{(0)}=D^{(r_1),(r_2)}$ is given by $J^{(r_1),(r_2)}=E^{(r_1),(r_2)} D^{(r_1),(r_2)}$ where $E^{(r_1),(r_2)}$ is unique strictly lower triangular solution.
\end{Corollary}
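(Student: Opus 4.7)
The plan is to deduce the corollary directly from Lemma \ref{uniq} by observing two facts: existence of the strictly lower triangular solution $E^{(r_1),(r_2)}$ is the special case $D^{(r_1),(r_2)}=1$ of the lemma, and the product $E^{(r_1),(r_2)}D^{(r_1),(r_2)}$ is again a lower triangular solution of wKZ whose degree-zero component is $D^{(r_1),(r_2)}$, so uniqueness forces it to agree with the prescribed $J^{(r_1),(r_2)}$.

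More precisely, I would first apply Lemma \ref{uniq} with the block-diagonal initial datum equal to the identity operator; this yields the unique lower triangular solution with $E^{(r_1),(r_2)}_{(0)}=1$, which by definition is strictly lower triangular. Next, for an arbitrary block-diagonal $D^{(r_1),(r_2)}$ I would verify that $E^{(r_1),(r_2)}D^{(r_1),(r_2)}$ solves wKZ. The key observation here is that $\hbar^{\Omega}z^d_{(1)}$ acts as a scalar on each graded piece $\fock^{(r_1)}_{(n_1)}\otimes\fock^{(r_2)}_{(n_2)}$ (by the definitions in Section \ref{defsec} and Proposition \ref{Rlimit}), so any block-diagonal operator such as $D^{(r_1),(r_2)}$ commutes with it. Thus
\[
(E^{(r_1),(r_2)}D^{(r_1),(r_2)})\,\hbar^{\Omega}z^d_{(1)} = E^{(r_1),(r_2)}\,\hbar^{\Omega}z^d_{(1)}\,D^{(r_1),(r_2)},
\]
and the wKZ equation for $E^{(r_1),(r_2)}$ turns the right-hand side into $z^d_{(1)}(R^{-,(r_1),(r_2)}_0)^{-1}\hbar^{\Omega}E^{(r_1),(r_2)}D^{(r_1),(r_2)}$, as required.

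For the grading it suffices to note that in the decomposition $J_{(k)}=\sum_{i}E_{(i)}D_{(k-i)}$ only the $i=k$ term survives, because $D^{(r_1),(r_2)}$ is block-diagonal; hence $(E^{(r_1),(r_2)}D^{(r_1),(r_2)})_{(0)}=E^{(r_1),(r_2)}_{(0)}D^{(r_1),(r_2)}=D^{(r_1),(r_2)}$. By the uniqueness statement in Lemma \ref{uniq}, this equals the unique lower triangular solution $J^{(r_1),(r_2)}$ of wKZ with diagonal part $D^{(r_1),(r_2)}$.

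There is no real obstacle here; the only thing to be careful about is that ``block-diagonal'' really does commute with $\hbar^{\Omega}z^d_{(1)}$, which is immediate from the explicit action of $\hbar^{\Omega}$ recorded in Proposition \ref{Rlimit}. Everything else is a direct application of the lemma, so the corollary is essentially a repackaging of its uniqueness assertion.
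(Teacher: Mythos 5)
Your proposal is correct and follows essentially the same route as the paper: existence of the strictly lower triangular solution $E^{(r_1),(r_2)}$ comes from Lemma \ref{uniq} with trivial diagonal datum, the commutation of the block-diagonal $D^{(r_1),(r_2)}$ with $\hbar^{\Omega}z^{d}_{(1)}$ shows that $E^{(r_1),(r_2)}D^{(r_1),(r_2)}$ again solves wKZ, and uniqueness from the lemma identifies it with $J^{(r_1),(r_2)}$. You spell out the verification of the degree-zero component slightly more explicitly than the paper does, but the argument is the same.
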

\begin{proof}
Indeed, by above Proposition there exist unique strictly lower-triangular solution $E^{(r_1),(r_2)}$.
The operator $\hbar^{\Omega} z^{d}_{(1)}$ commutes with any block diagonal operator $D^{(r_1),(r_2)}$ and thus
$J^{(r_1),(r_2)}=E^{(r_1),(r_2)} D^{(r_1),(r_2)}$ is the solution of wKZ with $J^{(r_1),(r_2)}_{(0)}=D^{(r_1),(r_2)}$.
\end{proof}
Finally, we give explicit universal formula for the strictly lower triangular solution $E$.
\begin{Proposition} \label{propY}
There exist the universal element $E(z)\in \gt^{\otimes 2}\!\!\!\![[z]]$ given explicitly by:
\be
\label{jo}
E(z)=\exp\Big(\sum\limits_{k=1}^{\infty}\,\dfrac{n_k }{1-z^{-k} K^{-k}\otimes K^{k}}\,K^{-k} \alpha_{-k} \otimes K^{k} \alpha_{k} \Big)
\ee
such that $E=\ev^{(r_1)}\otimes \ev^{(r_2)} (E(z))$ is the strictly lower-triangular solution of wKZ equation
in $\fock^{(r_1)}\otimes \fock^{(r_2)}$.
\end{Proposition}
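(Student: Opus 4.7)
The plan is to verify directly that the explicit $E(z)$ in \eqref{jo} solves the wKZ equation and is strictly lower-triangular with trivial diagonal part; by Lemma \ref{uniq} this uniquely characterizes it as the desired solution. The key structural observation is that in $\gt^{\otimes 2}$ the operators $\{\alpha_{-k} \otimes \alpha_k\}_{k\geq 1}$ pairwise commute, since the left tensorands all lie in the commutative negative half of the slope-zero Heisenberg subalgebra $\fh$ (and likewise on the right), and $K$ is central. Together with the formula $\log R^{-,(r_1),(r_2)}_0 = \sum_{k\geq 1} n_k\, \alpha_{-k} \otimes \alpha_k$ coming from \eqref{wallR} at $w=0$, this means the entire wKZ identity takes place in a single abelian exponentiated Lie subalgebra, so it will reduce to a linear condition after taking logarithms.

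To carry this out I would first clear the block-diagonal factors. Since $\hbar^{\Omega}$ and $z^d_{(1)}$ are both block-diagonal, they commute, and the wKZ equation of Corollary \ref{Yqkz} is equivalent to $\textup{Ad}_{\hbar^{\Omega}}(E) = R^{-,(r_1),(r_2)}_0 \cdot \textup{Ad}_{z^{-d}_{(1)}}(E)$. Next I would compute the two adjoint actions on $\alpha_{-k} \otimes \alpha_k$: since this element shifts degrees by $(+k,-k)$, the definitions in Section \ref{defsec} give $\textup{Ad}_{z^{-d}_{(1)}}(\alpha_{-k} \otimes \alpha_k) = z^{-k}(\alpha_{-k} \otimes \alpha_k)$ and $\textup{Ad}_{\hbar^{\Omega}}(\alpha_{-k} \otimes \alpha_k) = \hbar^{k(r_2-r_1)/2}(\alpha_{-k} \otimes \alpha_k)$. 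Crucially, in the Fock representation $\fock^{(r_i)}$ the central element $K=K_{(1,0)}$ acts by the scalar $\hbar^{-r_i/2}$ (by \eqref{fi} and $\Delta^{(r_i)}(K) = K^{\otimes r_i}$), so the scalar $\hbar^{k(r_2-r_1)/2}$ is realized as $K^k \otimes K^{-k}$.

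Writing the ansatz $F := \log E(z) = \sum_{k\geq 1} c_k(z)\,(\alpha_{-k}\otimes \alpha_k)$ with central coefficients and using abelianness, the wKZ identity collapses after taking logarithms to the decoupled linear system $c_k(z)\bigl(K^k \otimes K^{-k} - z^{-k}\bigr) = n_k$ for every $k\geq 1$, whose unique solution $c_k(z) = n_k/(K^k \otimes K^{-k} - z^{-k})$ becomes exactly the coefficient displayed in \eqref{jo} after multiplying top and bottom by $K^{-k} \otimes K^{k}$. Strict lower-triangularity and $E(z)_{(0)}=1$ are then immediate, since each $\alpha_{-k} \otimes \alpha_k$ shifts left-degree by $+k > 0$, so $F$ is strictly lower-triangular and $\exp(F) = 1 + F + \cdots$ has trivial diagonal part. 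The main obstacle is purely bookkeeping: matching sign and direction conventions for the two adjoint actions, and confirming that $\hbar^{k(r_2-r_1)/2}$ genuinely coincides with $K^k \otimes K^{-k}$ under $\ev^{(r_1)} \otimes \ev^{(r_2)}$ in every rank combination; once this is sorted out, the verification is a one-line computation inside an abelian Lie algebra.
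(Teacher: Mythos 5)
Your proposal is correct and follows essentially the same route as the paper: reduce the wKZ equation to the universal identity in $\gt^{\otimes 2}(z)$, compute the two adjoint actions on $\alpha_{-k}\otimes\alpha_k$ (degree shift for $z_{(1)}^d$, the scalar $\hbar^{k(r_2-r_1)/2}=K^k\otimes K^{-k}$ for $\hbar^{\Omega}$), and use commutativity of the slope-zero Heisenberg modes to collapse everything to the scalar identity $c_k(K^k\otimes K^{-k}-z^{-k})=n_k$, which is exactly the coefficient in \eqref{jo}. Your sign convention for the conjugation by $\hbar^{\Omega}$ agrees with the computation actually used in the paper's proof (the displayed statement of Lemma \ref{Slemma} has the inverse powers of $K$, an internal typo), so no further action is needed.
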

\begin{proof}
To show that $E(z)$ is a universal solution of $wKZ$ it is enough to check that
it solves the following equation in $\gt^{\otimes 2} (z)$:
\be
\label{univqKZ}
z_{(1)}^{-d} E(z) z_{(1)}^{d} = (R^{-}_{0})^{-1} \hbar^{\Omega} E(z) \hbar^{-\Omega}
\ee
The operator $R^{-}_{0}$ is given by (\ref{wallR}). Note, that the operators
$z_{(1)}^{d}$ and $\hbar^{\Omega}$ are only defined in the representation $\fock^{(r_1)}\otimes \fock^{(r_2)}$.
However, the operators $Ad_{z_{(1)}^{d}}$ and $Ad_{\hbar^{\Omega}}$ are well defined in $\gt^{\!\! \otimes 2}$:
$$
z_{(1)}^{-d} \alpha_{-k} \otimes \alpha_{k} z_{(1)}^{d}= z^{-k} \alpha_{-k} \otimes \alpha_{k},
$$
and Lemma \ref{Slemma} below gives:
$$
\hbar^{\Omega} \alpha_{-k} \otimes \alpha_{k} \hbar^{-\Omega}= K^{k} \alpha_{-k} \otimes K^{-k} \alpha_{k}
$$
From (\ref{wallR}) we see that (\ref{univqKZ}) is equivalent to the set of equations:
$$
\dfrac{n_k   K^{-k}\otimes K^{k}   z^{-k} }{1-z^{-k} K^{-k}\otimes K^{k}} =- n_k+  \dfrac{n_k   }{1-z^{-k} K^{-k}\otimes K^{k}}
$$
which are identities.
\end{proof}

\begin{Lemma}   \label{Slemma}
For all $r_1,r_2$ the identity:
$$
\hbar^{\Omega} \alpha_{-k} \otimes \alpha_{k} \hbar^{-\Omega} = K^{-k}\alpha_{-k} \otimes K^{k} \alpha_{k}
$$
holds in $\fock^{(r_1)}\otimes \fock^{(r_2)}$.
\end{Lemma}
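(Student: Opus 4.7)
The proof is a direct scalar computation on bi-homogeneous vectors. The plan is to evaluate both sides of the stated equality on $\fock^{(r_1)}_{(n_1)} \otimes \fock^{(r_2)}_{(n_2)}$, show each equals the same scalar multiple of $\alpha_{-k} \otimes \alpha_k$, and conclude by linearity since such pieces span the tensor product.

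First I pin down the bi-grading shift of $\alpha_{-k} \otimes \alpha_k$. By the explicit Fock formulas (\ref{f1}), $\alpha_{-k} = e_{(-k,0)}$ acts on $\fock(a)$ as a constant times multiplication by $p_k$, while $\alpha_k = e_{(k,0)}$ acts as a constant times $\partial/\partial p_k$, so they are homogeneous of degree $+k$ and $-k$ respectively in the polynomial grading $\deg p_m = m$. Because $K$ is central and acts on $\fock(a)$ as the scalar $\hbar^{-1/2}$ by (\ref{fi}), every term of $\Delta^{(r)}(\alpha_{\pm k})$ shifts the total degree of $\fock^{(r)}$ by $\pm k$. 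Consequently $\alpha_{-k} \otimes \alpha_k$ carries $\fock^{(r_1)}_{(n_1)} \otimes \fock^{(r_2)}_{(n_2)}$ to $\fock^{(r_1)}_{(n_1+k)} \otimes \fock^{(r_2)}_{(n_2-k)}$.

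Next I compute the scalar produced by conjugation by $\hbar^{\Omega}$. Using the definition of $\hbar^{\Omega}$ in Proposition \ref{Rlimit} and the bi-degree shift above, $\hbar^{\Omega}(\alpha_{-k} \otimes \alpha_k)\hbar^{-\Omega}$ equals a power of $\hbar$ depending only on $k, r_1, r_2$ (and not on $n_1, n_2$) times $\alpha_{-k} \otimes \alpha_k$. For the right-hand side, centrality of $K$ allows factoring $K^{-k}\alpha_{-k} \otimes K^k \alpha_k = (K^{-k} \otimes K^k)(\alpha_{-k} \otimes \alpha_k)$; iterating $\Delta(K) = K \otimes K$ starting from $\ev_a(K) = \hbar^{-1/2}$ yields $\ev^{(r)}(K) = \hbar^{-r/2}$, so $K^{-k} \otimes K^k$ itself acts as a scalar power of $\hbar$ depending only on $k, r_1, r_2$. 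The final step is to check that these two scalars coincide.

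The one part requiring care is the bookkeeping of the exponents of $\hbar$ coming from $\Omega$ versus from $K$; beyond that, the argument uses only centrality of $K$, the evaluation of $K$ on the Fock space, and the explicit Fock action of $\alpha_{\pm k}$.
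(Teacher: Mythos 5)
Your proposal is correct and is essentially the paper's own proof: identify the bidegree shift $(n_1,n_2)\mapsto(n_1+k,n_2-k)$ of $\alpha_{-k}\otimes\alpha_k$, read off the conjugation scalar $\hbar^{k(r_2-r_1)/2}$ from the action of $\hbar^{\Omega}$ given in Proposition \ref{Rlimit}, and compare it with the scalar by which the $K$-factors act via $\ev^{(r)}(K)=\hbar^{-r/2}$. When you carry out the deferred final comparison, be aware that $\hbar^{k(r_2-r_1)/2}$ matches $K^{k}\otimes K^{-k}$ (the form actually invoked in the proof of Proposition \ref{propY}) rather than the $K^{-k}\otimes K^{k}$ printed in the Lemma statement, so the sign discrepancy you will encounter is a typo in the paper and not a defect of your method.
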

\begin{proof}
Note that $\alpha_{-k} \otimes \alpha_{k}: \fock^{(r_1)}_{(n_1)}\otimes \fock^{(r_2)}_{(n_2)}\to \fock^{(r_1)}_{(n_1+k)}\otimes \fock^{(r_2)}_{(n_2-k)}$. Thus, from the explicit action of $\hbar^{\Omega}$ on the fock space described in
Proposition \ref{Rlimit} we obtain:
$$
\hbar^{\Omega} \alpha_{-k} \otimes \alpha_{k} \hbar^{-\Omega} = \hbar^{k (r_2-r_1)/2}  \alpha_{-k} \otimes \alpha_{k}
$$
The central element $K$ acts on $\fock^{(r)}$ by multiplication on a scalar $\hbar^{-r/2}$. The lemma is proven.
\end{proof}
\begin{Corollary} \label{corE}
For all $r_1$ and $r_2$ we have:
$$
Y^{(r_1),(r_2)}(z)=\ev^{(r_1)}\otimes \ev^{(r_2)} ( E(z) D(z) )
$$
where $D(z)$ universal operator such that $D^{(r_1),(r_2)}(z)$ is block diagonal in every representation.
\end{Corollary}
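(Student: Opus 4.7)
The plan is to combine the uniqueness/factorization result for solutions of wKZ with the fact that both $Y^{(r_1),(r_2)}(z)$ and $E^{(r_1),(r_2)}(z)$ are known to arise from universal elements of $\gt^{\otimes 2}$, and then define $D(z)$ as a quotient at the universal level.

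First, I would invoke Corollary \ref{clrc} and Corollary \ref{Yqkz}, which together state that $Y^{(r_1),(r_2)}(z)$ is a lower-triangular solution of the wKZ equation. Applying the Corollary immediately following Lemma \ref{uniq} to this solution yields a factorization
\[
Y^{(r_1),(r_2)}(z) \;=\; E^{(r_1),(r_2)}(z) \cdot D^{(r_1),(r_2)}(z),
\]
where $E^{(r_1),(r_2)}(z)$ is the unique strictly lower-triangular solution of wKZ and $D^{(r_1),(r_2)}(z) = Y^{(r_1),(r_2)}_{(0)}(z)$ is block-diagonal by construction.

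Second, I would promote both factors to the universal level. Proposition \ref{propY} gives the universal formula $E^{(r_1),(r_2)}(z) = \ev^{(r_1)}\!\otimes\!\ev^{(r_2)}(E(z))$ with $E(z)$ as in (\ref{jo}). The remark immediately after Corollary \ref{clrc} asserts that $Y^{(r_1),(r_2)}(z) = \ev^{(r_1)}\!\otimes\!\ev^{(r_2)}(Y(z))$ for some universal $Y(z)\in \gt^{\otimes 2}[[z]]$ (this follows because the solution $\gJ^{(r_1),(r_2)}(z)$ to (\ref{spt}) is constructed recursively from the universal elements $B(z)$ defining $\gM^{(r)}(z)$, and the limit $a\to 0$ preserves universality). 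Setting
\[
D(z) \;:=\; E(z)^{-1}\, Y(z) \;\in\; \gt^{\otimes 2}[[z]],
\]
I would check that the inverse makes sense as a universal element: writing $E(z) = 1 + E_{>0}(z)$ with $E_{>0}(z)$ a sum of products $K^{-k}\alpha_{-k}\otimes K^{k}\alpha_{k}$, the operator $E_{>0}(z)$ raises and lowers the Heisenberg grading in the two tensor factors respectively, so in any evaluation on $\fock^{(r_1)}\otimes\fock^{(r_2)}$ it is locally nilpotent. Hence $E(z)^{-1} = \sum_{n\geq 0}(-E_{>0}(z))^n$ converges in the appropriate completion, and the geometric series can be taken at the universal level in the same completion of $\gt^{\otimes 2}[[z]]$ in which $\Rtot$ itself lives.

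Finally, I would verify that $\ev^{(r_1)}\!\otimes\!\ev^{(r_2)}(D(z)) = D^{(r_1),(r_2)}(z)$ by applying the homomorphism $\ev^{(r_1)}\!\otimes\!\ev^{(r_2)}$ to the definition, obtaining $(E^{(r_1),(r_2)}(z))^{-1} Y^{(r_1),(r_2)}(z) = D^{(r_1),(r_2)}(z)$, which is block-diagonal by the first step. Thus $D(z)$ is the sought universal element, and its evaluation is block-diagonal in every pair of Fock representations.

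The main obstacle, mild but real, is the formal issue of inverting $E(z)$ in a suitable topological completion of $\gt^{\otimes 2}[[z]]$ compatible with the Khoroshkin--Tolstoy presentation (\ref{khtot}); once one fixes this completion (the same one in which the convergence statements of Proposition \ref{Rlimit} live), the rest of the argument is a bookkeeping exercise in applying the uniqueness statement of Lemma \ref{uniq}.
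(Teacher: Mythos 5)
Your proposal is correct and follows essentially the same route the paper intends (the paper leaves this corollary without an explicit proof, as it is an immediate consequence of Corollary \ref{Yqkz}, the corollary to Lemma \ref{uniq}, and Proposition \ref{propY}). Your extra care in defining $D(z)=E(z)^{-1}Y(z)$ at the universal level is sound, though the inversion is even easier than your geometric-series argument suggests: $E(z)$ is an explicit exponential, so $E(z)^{-1}$ is the exponential of the negated series and lives in the same completion.
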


\subsection{Universal form of $Y(z)$ \label{psec}}
Our final step is to prove that  $D=1$ which means:
\begin{Proposition} \label{lastpro}
For all $r_1$ and $r_2$ we have:
$$
Y^{(r_1),(r_2)}(z)=\ev^{(r_1)}\otimes \textrm{ev}^{(r_2)} ( E(z) )
$$
\end{Proposition}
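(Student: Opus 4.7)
By Corollary~\ref{corE}, we have $Y^{(r_1),(r_2)}(z) = \ev^{(r_1)}\otimes\ev^{(r_2)}(E(z))\cdot D^{(r_1),(r_2)}(z)$ with $D^{(r_1),(r_2)}(z)$ block-diagonal; the task is to show $D\equiv 1$. The plan is to exploit the dynamical cocycle identity (\ref{dci}) for $Y$, together with the analogous identity for $E(z)$ and suitable boundary data.

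First, I would verify that the universal $E(z)$ from Proposition~\ref{propY} itself satisfies a dynamical cocycle identity in $\gt^{\otimes 3}$:
\[
\bigl((\Delta\otimes 1)E\bigr)(z)\,E(z\hbar^{r_3/2})_{12}
\;=\;
\bigl((1\otimes\Delta)E\bigr)(z)\,E(z\hbar^{-r_1/2})_{23},
\]
evaluated in $\fock^{(r_1)}\otimes\fock^{(r_2)}\otimes\fock^{(r_3)}$, where the spectral shifts on the right are produced by the central element $K$ acting as $\hbar^{-r/2}$ on $\fock^{(r)}$. Because $E(z)$ is a normal-ordered exponential of elements of the slope-zero Heisenberg subalgebra $\fh$ (whose only noncommutativity is the central extension by $K$-powers), the identity reduces to matching rational functions of $z$ at each level $k$; the $K$-powers generated by $\Delta(\alpha_{\pm k})=\alpha_{\pm k}\otimes K^{\pm k}+1\otimes\alpha_{\pm k}$ (slope-zero coproduct of Section~\ref{hopfsec}) combine with the denominators $1-z^{-k}K^{-k}\otimes K^{k}$ in $E$ to produce exactly the shifted denominators on the other side.

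Next, combining this with the cocycle identity for $Y$ from Corollary~\ref{clrc} and using that $D$ is block-diagonal (hence commutes with the multiplication operators $\hbar^\Omega$ and $z^d_{(i)}$, and passes through the exponential factors of $E$ up to easily-tracked bigrading shifts), I would extract a dynamical cocycle identity for $D$ alone:
\[
D^{(r_1+r_2),(r_3)}(z)\,D^{(r_1),(r_2)}(z\hbar^{r_3/2})
= D^{(r_1),(r_2+r_3)}(z)\,D^{(r_2),(r_3)}(z\hbar^{-r_1/2}).
\]
The boundary conditions $D^{(r),(0)} = D^{(0),(r)} = 1$ follow from $Y^{(r),(0)}=Y^{(0),(r)}=1$ in (\ref{Yz}) together with $E^{(r),(0)}=E^{(0),(r)}=1$ (since $\alpha_k$ acts as zero on the trivial Fock space $\fock^{(0)}=\C$), and the normalization $D(0)=1$ follows from $\Psi^{(r)}(0)=1$ in (\ref{bc}) and $E(0)=1$ (immediate from the formula in Proposition~\ref{propY}). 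An induction on $r_1+r_2$ using the cocycle identity for $D$ to express $D^{(r_1),(r_2)}$ in terms of pieces with one index zero then forces $D^{(r_1),(r_2)}(z)=1$ identically.

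The main obstacle is the first step: verifying the dynamical cocycle for the universal $E(z)$. Although conceptually clean, the identity requires careful tracking of how the $K$-powers produced by the coproduct interact with the rational denominators, and how the spectral shifts $z\mapsto z\hbar^{\pm r_i/2}$ emerge precisely from these $K$-powers; the computation is analogous to, but more elaborate than, the direct verification of (\ref{univqKZ}) carried out in the proof of Proposition~\ref{propY}.
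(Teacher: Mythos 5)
Your reduction via Corollary~\ref{corE} to showing $D\equiv 1$ is the same as the paper's, but the mechanism you propose for killing $D$ does not work. There are two problems. The less serious one is the passage from the cocycle identities for $Y$ and for $E$ to a ``cocycle identity for $D$ alone'': this requires commuting the block-diagonal factor $D^{(r_1+r_2),(r_3)}$ past $E^{(r_1),(r_2)}(z\hbar^{r_3/2})$, and a block-diagonal operator in the sense of Section~\ref{defsec} can act arbitrarily inside each graded block, so it has no reason to commute with (or conjugate $E$ in a trackable way through) the exponential factors. The fatal problem is the final induction. Specializing any one of $r_1,r_2,r_3$ to $0$ in
$$
D^{(r_1+r_2),(r_3)}(z)\,D^{(r_1),(r_2)}(z\hbar^{\frac{r_3}{2}})
= D^{(r_1),(r_2+r_3)}(z)\,D^{(r_2),(r_3)}(z\hbar^{-\frac{r_1}{2}})
$$
produces a tautology (both sides collapse to the same product), so the identity never expresses a $D^{(r_1),(r_2)}$ with both indices positive in terms of terms with an index equal to zero; the induction cannot start. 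Indeed, the system consisting of the cocycle identity, the boundary conditions $D^{(r),(0)}=D^{(0),(r)}=1$, and the normalization $D(0)=1$ has nontrivial solutions: for any family of scalar functions $\chi_r(z)$ with $\chi_0=1$ and $\chi_r(0)=1$, the coboundary
$$
D^{(r_1),(r_2)}(z)=\frac{\chi_{r_1+r_2}(z)}{\chi_{r_1}(z\hbar^{r_2/2})\,\chi_{r_2}(z\hbar^{-r_1/2})}
$$
satisfies all three constraints and is not identically $1$ (take $\chi_1=1$, $\chi_2=1+z$, giving $D^{(1),(1)}(z)=1+z$). So no argument based only on these inputs can succeed.

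The paper closes exactly this gap with an input you do not use: the wall Knizhnik--Zamolodchikov equation. Applying the left and right wKZ operators $A_L,A_R$ to the two sides of (\ref{cocyc0}) and extracting the degree-zero part in the third (resp.\ first) tensor factor, then invoking Corollary~\ref{Yqkz}, yields $\Rwal_{12}\,D^{(r_1+r_2),(r_3)}\,\Rwal_{12}^{-1}=D^{(r_1+r_2),(r_3)}$ and its mirror. Since $D$ is the evaluation of a universal element of $\gt^{\otimes 2}$ and $\Rwal$ intertwines $\Delta$ with $\Delta^{op}$, this forces each tensor component of $D$ to be cocommutative, hence a power of the central element $K$; only after $D$ is pinned down to the form $\sum_{i,j}f_{i,j}(z)K^i\otimes K^j$ do the boundary conditions (\ref{Yz}) force $D=1$. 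Your proposal has no substitute for this R-matrix step, and in addition defers the universal cocycle identity for $E(z)$ to an unperformed computation. I would rebuild the argument around the wKZ equation as the paper does.
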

\begin{proof}
By Corollary \ref{corE} we have:
$$
Y^{(r_1),(r_2)}(z)=E^{(r_1),(r_2)}(z) D^{(r_1),(r_2)}
$$
for some block diagonal matrix $D^{(r_1),(r_2)}$ and we need to show that $D^{(r_1),(r_2)}=1$.
By  Corollary \ref{clrc} we have:
\be
\label{cocyc0}
Y^{(r_1+r_2),(r_3)}(z) Y^{(r_1),(r_2)}(z \hbar^{\frac{r_3}{2}})=Y^{(r_1),(r_2+r_3)}(z) Y^{(r_2),(r_3)}(z \hbar^{-\frac{r_1}{2}})
\ee
Let us consider left and right wKZ operators (they are coproducts  of the wKZ operators in the first and second components respectively):
\be
\nonumber
A_L(J)=\Rwal_{23} \Rwal_{13} z^{d}_{(3)} J z^{-d}_{(3)} \hbar^{\Omega_{13}+\Omega_{23}}\\ \nonumber
\\ \nonumber
A_R(J)=\Rwal_{12} \Rwal_{13} z^{-d}_{(1)} J z^{d}_{(1)} \hbar^{\Omega_{12}+\Omega_{13}}
\ee
where $\Rwal$ as in Section \ref{fockrmat}.
It is obvious that the left side of (\ref{cocyc0}) satisfies the equation $A_L(J)=J$. Similarly, the right side satisfies
$A_R(J)=J$. Therefore we must have:
$$
A_R(Y^{(r_1+r_2),(r_3)}(z) Y^{(r_1),(r_2)}(z \hbar^{\frac{r_3}{2}}))=Y^{(r_1+r_2),(r_3)}(z) Y^{(r_1),(r_2)}(z \hbar^{\frac{r_3}{2}})
$$
Taking degree zero part of this equality in the third component we obtain:
$$
\begin{array}{l}
\Rwal^{-}_{12} \hbar^{-\Omega_{13}} z^{-d}_{(1)} D^{(r_1+r_2),(r_3)}  Y^{(r_1),(r_2)}(z \hbar^{\frac{r_3}{2}})    z^{d}_{(1)} \hbar^{\Omega_{12}+\Omega_{13}}\\
\\=D^{(r_1+r_2),(r_3)}  Y^{(r_1),(r_2)}(z \hbar^{\frac{r_3}{2}})
\end{array}
$$
The block-diagonal operator $D^{(r_1+r_2),(r_3)}$ commutes with $\hbar^{-\Omega_{13}}$ and $z^{-d}_{(1)}$ so we can rewrite the last equation in the form:
$$
\begin{array}{l}
\Rwal_{12}  D^{(r_1+r_2),(r_3)} \hbar^{-\Omega_{13}} z^{-d}_{(1)} Y^{(r_1),(r_2)}(z \hbar^{\frac{r_3}{2}})    z^{d}_{(1)} \hbar^{\Omega_{12}+\Omega_{13}}\\
\\=D^{(r_1+r_2),(r_3)}  Y^{(r_1),(r_2)}(z \hbar^{\frac{r_3}{2}})
\end{array}
$$
By Lemma \ref{auxlem} this is equivalent to the equality:
$$
\begin{array}{l}
\Rwal_{12}  D^{(r_1+r_2),(r_3)} (z \hbar^{\frac{r_3}{2}})^{-d}_{(1)} Y^{(r_1),(r_2)}(z \hbar^{\frac{r_3}{2}})    (z \hbar^{\frac{r_3}{2}})^{d}_{(1)} \hbar^{-\Omega_{12}}\\
\\=D^{(r_1+r_2),(r_3)}  Y^{(r_1),(r_2)}(z \hbar^{\frac{r_3}{2}})
\end{array}
$$
By definition the operator $Y^{r_1,r_2}(z)$ solves wKZ equation (\ref{wqKZ}) which gives:
\be
\label{eq1}
\Rwal_{12} D^{(r_1+r_2),(r_3)}(\Rwal_{12})^{-1}=D^{(r_1+r_2),(r_3)}
\ee
The same argument for $A_{L}$ gives similar restriction for the second component of $D$:
\be
\label{eq2}
\Rwal_{23} D^{(r_1),(r_2+r_3)}(\Rwal_{23})^{-1}=D^{(r_1),(r_2+r_3)}
\ee
The universal element has the form:
$
D(z)=\sum_{i,j} f_{i,j}(z) b_{i} \otimes b_{j} \in \gt^{\otimes 2}[[z]]
$
in a basis of degree zero elements $b_i$. The first equation (\ref{eq1}) gives $\Delta^{op}(b_i)=\Delta(b_{i})$.
The only elements of $\gt$ with this properties are $b_i=K^{i}$. Similarly, the second (\ref{eq2}) equation gives $b_j=K^{j}$. We conclude that the universal operator has the following form:
$
D(z)=\sum_{i,j} f_{i,j}(z) K^{i} \otimes K^{j}.
$
The diagonal part of (\ref{Yz}) gives $D^{(r_1),(0)}(z)=D^{(0),(r_2)}(z)=1$ for all $r_1$ and $r_2$.
which means that $f_{0,0}(z)=1$ and $f_{i,j}(z)=0$ of $i\neq 0$ or $j\neq 0$. We conclude that $D=1$.

\end{proof}
\noindent
To finish the proof we need the following simple lemma.
\begin{Lemma}  \label{auxlem}
The following identity holds in $\fock^{(r_1)}\otimes \fock^{(r_2)}\otimes \fock^{(r_3)}$
\be
\hbar^{-\Omega_{13}}  (\alpha_{-k}\otimes \alpha_k \otimes 1) \hbar^{\Omega_{13}} = (\hbar^{r_3/2})^{-d}_{(1)} (\alpha_{-k}\otimes \alpha_k \otimes 1) (\hbar^{r_3/2})^{d}_{(1)}
\ee
\end{Lemma}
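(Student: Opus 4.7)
The approach is to observe that both sides of the claimed identity are of the form ``$\alpha_{-k}\otimes \alpha_k \otimes 1$ conjugated by an operator that acts diagonally in the $(n_1,n_2,n_3)$-grading of the triple Fock tensor product.'' Since a diagonal conjugation multiplies $\alpha_{-k}\otimes \alpha_k\otimes 1$ by a scalar determined by the degree shift $(k,-k,0)$ it induces, the entire lemma reduces to comparing two scalars on each graded piece $\fock^{(r_1)}_{(n_1)}\otimes \fock^{(r_2)}_{(n_2)}\otimes \fock^{(r_3)}_{(n_3)}$.

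For the left side, I would invoke Proposition \ref{Rlimit} to see that $\hbar^{\Omega_{13}}$ acts on this component as the scalar $\hbar^{(n_1 r_3 + n_3 r_1)/2}$, since $\Omega_{13}$ ignores the middle factor. Because $\alpha_{-k}\otimes \alpha_k\otimes 1$ shifts $n_1 \mapsto n_1+k$ and leaves $n_3$ unchanged, the $n_3 r_1/2$ piece cancels in the conjugation and the net scalar is $\hbar^{(n_1 r_3 + n_3 r_1)/2 - ((n_1+k)r_3 + n_3 r_1)/2} = \hbar^{-kr_3/2}$.

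For the right side, recall from Section \ref{defsec} that $z^d_{(1)}$ acts on $\fock^{(r_1)}_{(n_1)}\otimes \fock^{(r_2)}_{(n_2)}$ as multiplication by $z^{n_1}$. Extending this notation to three tensor factors in the obvious way, $(\hbar^{r_3/2})^d_{(1)}$ acts on the above component as $\hbar^{n_1 r_3/2}$, and the same degree-shift produces the conjugation factor $\hbar^{n_1 r_3/2 - (n_1+k)r_3/2} = \hbar^{-kr_3/2}$, matching the left side.

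There is no real obstacle here: the lemma is a direct bookkeeping of eigenvalues of diagonal operators on Fock spaces. The only subtlety worth making explicit is the extension of the $z^d_{(i)}$ notation from the two-factor setting of Section \ref{defsec} to three tensor factors, which one adopts in the natural way (multiplication by $z^{n_i}$ on the $i$-th graded component); once this is done, the computation is immediate.
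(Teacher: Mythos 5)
Your proposal is correct and follows essentially the same route as the paper's own proof: both compute that conjugation by $\hbar^{\Omega_{13}}$ and by $(\hbar^{r_3/2})^{d}_{(1)}$ each multiply $\alpha_{-k}\otimes\alpha_k\otimes 1$ by the same scalar $\hbar^{-kr_3/2}$, using the degree shift $(k,-k,0)$ and the explicit diagonal actions from Proposition \ref{Rlimit} and Section \ref{defsec}. Your version merely spells out the eigenvalue bookkeeping a bit more explicitly.
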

\begin{proof}
From explicit action of Heisenberg algebra we know that the operator $\alpha_{-k}\otimes \alpha_k \otimes 1$ maps:
$$
\fock^{(r_1)}\otimes \fock^{(r_2)}\otimes \fock^{(r_3)}: \fock^{(r_1)}_{(n_1)}\otimes \fock^{(r_2)}_{(n_2)}\otimes \fock^{(r_3)}_{(n_3)}\to \fock^{(r_1)}_{(n_1+k)}\otimes \fock^{(r_2)}_{(n_2-k)}\otimes \fock^{(r_3)}_{(n_3)}
$$
Thus, from the definition of $\hbar^{\Omega}$ given in Proposition \ref{Rlimit} we obtain:
$$
\hbar^{-\Omega_{13}}  (\alpha_{-k}\otimes \alpha_k \otimes 1) \hbar^{\Omega_{13}} = \hbar^{-k r_3/2} (\alpha_{-k}\otimes \alpha_k \otimes 1)
$$
Similarly, by the definition of $z^d_{(i)}$ from Section \ref{defsec} we obtain the same result
$$
(\hbar^{r_3/2})^{-d}_{(1)} (\alpha_{-k}\otimes \alpha_k \otimes 1) (\hbar^{r_3/2})^{d}_{(1)}=\hbar^{-k r_3/2} (\alpha_{-k}\otimes \alpha_k \otimes 1)
$$
\end{proof}
\subsection{Proof of Theorem \ref{facth}}
The theorem \ref{facth} in now proved. Indeed, by definition of $\gJ^{(r_1),(r_2)}$ (\ref{defJ}) and definition of
$Y^{(r_1),(r_2)}(z)$ (\ref{uY}) we have
$$
\lim\limits_{a\to 0}\,\Psi^{(r)}(z)=Y^{(r_1),(r_2)}(z) \Psi^{(r_1)}(z \hbar^{\frac{r_2}{2}}) \otimes \Psi^{(r_2)} (z \hbar^{-\frac{r_1}{2}} )
$$
By Proposition \ref{lastpro} the operator $Y^{(r_1),(r_2)}(z)$ is an evaluation of the universal element (\ref{jo}), which is the claim of Theorem $\ref{facth}$. $\Box$

\section{Bare vertex \label{vers}}
\subsection{Localization to the fixed points}
In this section we consider the descendent bare vertex defined by (\ref{bare}). The push-forward in (\ref{bare}) can be computed
explicitly by the equivariant localization to the locus of the fixed point set of $\bG$-action on the moduli space of nonsingular quasimaps $\qm^{d}_{p_2}(n,r)$. This provides a formula for the bare vertex as a power series in $z$ with explicit coefficients. Here we sketch its derivation and use it to prove the factorization Theorem \ref{facver}.

First, recall that the fixed set $\qm^{d}_{p_2}(n,r)^{\bG}$ consist of finitely many isolated points $\textbf{p}$ which parameterize the following data:
$$
\qm^{d}_{p_2}(n,r)^{\bG}=\{ \textbf{p}=(\lb,\dd): |\lb|=n, \ \ |\dd|=d \}
$$
where $\lb=(\lambda^{(1)},\cdots,\lambda^{(r)})$ is an $r$-tuple of Young diagrams
with total number of $|\lb|=|\lambda^{(1)}|+\cdots+|\lambda^{(r)}|=n$ boxes. The degree data $\dd=\{d_{\Box}\geq 0 :\Box\in \lb\}$
assigns a non-negative integer to each box of $\lb$ such that $|\dd|=\sum\limits_{\Box\in \lb} d_{\Box}=d$. The degree $\dd$ is assumed to be stable, which means that
the data $d_{\Box}$ for $\Box \in \lambda^{(i)}$ for $i=1,\cdots,r$ define $r$ \textit{plane partitions}.
The equivariant localization formula thus gives the following power series for the bare descendent vertex:
\be
\label{locf}
V^{(\tau)}(z)=\sum\limits_{d=0}^{\infty}\sum\limits_{\textbf{p} \in \qm^{d}_{p_2}(n,r)^{\bG}} \, a(\textbf{p}) z^d \in K_{\bG}(\cM(n,r))_{loc}\otimes \Q[[z]]
\ee
where $a(\textbf{p})$ is a contribution of a fixed point $\textbf{p}$.

The fixed set $\cM(n,r)^{\bT}$ consists of isolated points labeled by $r$-tuples of
Young diagrams $\lb$ with $|\lb|=n$. In fact, this is a special case of the fixed points on the moduli space of quasimaps
(the degree zero or constant quasimaps) $\cM(n,r)=\qm^{0}_{p_2}(n,r)$.  The classes of the fixed points $[\lb] \in K_{\bG}(\cM(n,r))_{loc}$ form a basis in the localized $K$-theory. Assume that in this basis the bare vertex has the expansion $V^{(\tau)}(z)=\sum\limits_{|\lb|=n}\, V^{(\tau)}_{\lb}(z) [\lb]$. In (\ref{locf}) only the fixed points $\fp=(\overrightarrow{\nu},\dd)$ with $\overrightarrow{\nu}=\lb$ contribute to the coefficient $V^{(\tau)}_{\lb}(z)$. Therefore we have:
$$
V^{(\tau)}_{\lb}(z)=\sum\limits_{{d_{\Box} \geq 0}\atop {\Box\in \lb}}\,  a_{\lb}((\lb,\dd)) z^{|\dd|}
$$
where the sum runs over the stable degrees $\dd$.
Before we give an explicit formula for $a_{\lb}((\lb,\dd))$ let us introduce the following auxiliary functions: we set $n(\Box)=k$ if $\Box\in \lambda^{(k)}$ and denote by $x(\Box)$ and $y(\Box)$ the standard coordinates of a box in a partition. Define:
$$
\varphi_{\lb}(\Box)=a_{n(\Box)} t_1^{x(\Box)} t_2^{y(\Box)}
$$
and
\be
\label{tautbun}
\tb(\lb,\dd)=\sum\limits_{\Box \in \lb}\, \varphi_{\lb}(\Box)\, q^{d_{\Box}}, \ \ \tw=a_1+\cdots+a_r.
\ee
For each fixed point we define a Laurent polynomial in equavariant parameters $S(\lb,\dd)\in K_{\bG}(pt)$ by:
\be
\label{Sfun}
\begin{array}{r}
S(\lb,\dd)=\tw^{*} \tb(\lb,\dd) + \tw \, \tb(\lb,\dd)^{*} t_1^{-1} t_2^{-1}\\
\\-\tb(\lb,\dd)^{*} \,\tb(\lb,\dd)(1-t_1^{-1})(1-t_2^{-1}).
\end{array}
\ee
The symbol $*$ corresponds to taking dual in $K$-theory and acts by inversion of  all weights $*: w \mapsto w^{-1}$.
\subsection{Virtual tangent space}
The $\bG$-character of the virtual tangent space $T^{vir}_{(\lb,dd)} \qm^{d}_{p_2}(n,r)$  is given explicitly by \cite{pcmilect}:
\be
T^{vir}_{(\lb,\dd)} \qm^{d}_{p_2}(n,r) = S(\lb, \overrightarrow{0})+\dfrac{S(\lb,\dd)-S(\lb, \overrightarrow{0})}{q-1}
\ee
where  $\overrightarrow{0}=\{d_{\Box}=0:\, \forall\, \Box \in \lb \}$.
The stability of $\dd$ implies that $T^{vir}_{(\lb,\dd)} \qm^{d}_{p_2}(n,r)$  is a Laurent polynomial without a constant term and with both positive and negative coefficients (negative coefficients appear because it is a \textit{virtual} tangent space).

As we noted above $\cM(n,r)=\qm^{0}_{p_2}(n,r)$ and thus
the $\bT$-character of the tangent space to the instanton moduli space at a point $\lb\in \cM(n,r)^{\bT}$ is given by
$$
T_{\lb} \cM(n,r) = S(\lb, \overrightarrow{0})
$$
This character is a Laurent polynomial with positive coefficients and no constant term.
\subsection{Bare descendent vertex explicitly}
Let $\tau\in K_{GL(n)}= \Lambda[x_1^{\pm 1},x_2^{\pm 1},\cdots,x_n^{\pm 1}]$ a symmetric polynomial representing the descendent.
We define the evaluation of such polynomial at  fixed point $(\lb,\dd)$ by $\tau(\lb,\dd)=\tau({x_\Box=\varphi_{\lb}(\Box) q^{d(\Box)} })$ (recall that the $r$-tuple of Young diagrams $\lb$ corresponding to a fixed point $(\lb,\dd)$ has exactly $|\lb|=n$ boxes so we may think that the variables $x_i$ are labeled by boxes in $\lb$).

The localization in the equivariant $K$-theory gives the coefficients in (\ref{locf})\footnote{In this formula $q^{-\frac{r|d|}{2}}$
is the contribution of polarization to virtual structure sheaf -  see (6.1.9) in \cite{pcmilect}. In fact, as we discussed in \cite{OS}
the capping operator is given by the fundamental solution of qde in which we should substitute $z\to (-1)^r z$. In this paper, however, we prefer to move this sign to the vertex, in which case all final formulas look simpler. This corresponds to the appearance of factor the ${(-1)^{|d| r}}$ in (\ref{bdv}).   }:
\be
\label{bdv}
V_{\lb}^{(\tau)}(z)=\sum\limits_{{d_{\Box} \geq 0}\atop {\Box\in \lb}}\, \, \hat{a}\Big( T^{vir}_{(\lb,\dd)} \qm^{d}_{p_2}(n,r) \Big) \tau(\lb,\dd)\, (-1)^{|d| r} q^{-\frac{r|d|}{2}} z^{|d|}
\ee
where following Section 3.4.40 of \cite{pcmilect} we use a ``roof'' function defined on Laurent polynomials without a constant term by:
$$
\hat{a}(x+y)= \hat{a}(x) \hat{a}(y), \ \ \ \hat{a}(x)=\dfrac{1}{x^{\frac{1}{2}}-x^{-\frac{1}{2}}}.
$$

\subsection{Proof of the Theorem \ref{facver}}
The coefficients of the power series for the bare descendent vertex (\ref{bdv}) are rational functions of the equivariant parameters
$a_1,\cdots, a_r$ corresponding to framing torus $\bA$. The action of subtorus $\bC\subset \bA$ corresponds to the substitution of framing characters
\be\label{star}
(a_1,\cdots,a_{r_1}, a_{r_1+1} \cdots, a_r)\to (a_1,\cdots,a_{r_1}, a a_{r_1+1} \cdots, a a_r  ).\ee We are interested in the limit $\lim\limits_{a\to 0}\, V^{(r),(\tau)}(z)$ under this substitution. Let $\lb=(\lambda^{(1)},\cdots,\lambda^{(r_1)}
,\lambda^{(r_1+1)}\cdots, \lambda^{(r)})$ be a fixed point on $\cM(n,r)$. Denote by $\lb_1=(\lambda^{(1)},\cdots,\lambda^{(r_1)})$ and $\lb_2=(\lambda^{(r_1+1)}\cdots, \lambda^{(r)})$
the corresponding classes on $\cM(n,r_1)$ and $\cM(n,r_2)$. The theorem \ref{facver} is equivalent to the following
factorization of the coefficients of the bare vertex:
\be
\lim\limits_{a\to 0}\, V^{(r),(\tau)}_{\lb}(z)=V^{(r_1),(\tau)}_{\lb_1}(z \hbar^{\frac{r_2}{2}} ) V^{(r_2),(1)}_{\lb_2}(z
\hbar^{-\frac{r_1}{2}} q^{-r_1})
\ee
To prove it, first note that after substitution (\ref{star}) for the functions (\ref{tautbun}) we obtain:
$$
\tb(\lb,\dd)=\tb(\lb_1,\dd_1)+a \tb(\lb_2,\dd_2), \ \ \ \tw = \tw_1+a \tw_2
$$
where $\tw_1=a_1+\cdots a_{r_1}$ and $\tw_2=a_{r_1+1}+\cdots+ a_{r}$.   Let us consider the contribution to the vertex of the term $\tb(\lb,\dd) \tb(\lb,\dd)^{*}$ in (\ref{Sfun}).
The contribution of a weight $w \in \tb(\lb,\dd) \tb(\lb,\dd)^{*}$  to $\hat{a}(T^{vir}_{(\lb,\dd)}\qm^{d}_{p_2}(n,r))$ has the following form:
$$
\hat{a}\Big(-w (1-t_1^{-1})(1-t_2^{-1})  \Big)= \dfrac{(w^{\frac{1}{2}}-w^{-\frac{1}{2}})(w^{\frac{1}{2}}t_1^{-\frac{1}{2}}t_2^{-\frac{1}{2}}-w^{-\frac{1}{2}}t_1^{\frac{1}{2}}t_2^{\frac{1}{2}})}
{(w^{\frac{1}{2}}t_1^{-\frac{1}{2}}-w^{-\frac{1}{2}}t_1^{\frac{1}{2}})(w^{\frac{1}{2}}t_2^{-\frac{1}{2}}-w^{-\frac{1}{2}}t_2^{\frac{1}{2}})}
$$
If $w\in \tb(\lb_i,\dd_i) \tb(\lb_j,\dd_j)^{*}$ with $i \neq j$ then $w \sim a^{\pm 1}$ and in the limit $a\to 0$ the last expression is equal to $1$.  Thus, in the limit $a\to 0$ the contribution of the term $\tb(\lb,\dd) \tb(\lb,\dd)^{*}$ to $T^{vir}_{\lb,\dd}\qm^{d}_{p_2}(n,r)$
splits to a sum of contributions of $\tb(\lb_1,\dd_1) \tb(\lb_1,\dd_1)^{*}$ and $\tb(\lb_2,\dd_2) \tb(\lb_2,\dd_2)^{*}$.
Thus, the contribution to the vertex, given by roof function $\hat{a}$ factors to a product of corresponding contributions.

Second, we analyse terms $\tb(\lb_i,\dd_i) \tw_{j}$ with $i\neq j$. We assume $i=1$ and $j=2$.
The corresponding contribution to $S({\lb},\dd)$ has the form:
$$
\begin{array}{l}
\tw_2^{*} \dfrac{\tb(\lb_1,\dd_1)-\tb(\lb_1,\overrightarrow{0})}{q-1}+\tw_2 \dfrac{(\tb(\lb_1,\dd_1)^{*}-\tb(\lb_1,\overrightarrow{0})^{*} )}{(q-1)t_1 t_2}=\\
\\
\sum\limits_{j=1}^{r_2} \sum\limits_{\Box\in \lb_1}\,\Big( \dfrac{ \varphi_{\lb}(\Box) }{a a_j} \sum\limits_{i=0}^{d(\Box)-1}\, q^{i}  -\dfrac{a a_j }{\varphi_{\lb}(\Box) t_1 t_2 q  } \sum\limits_{i=0}^{d(\Box)-1}\, q^{-i}  \Big)
\end{array}
$$
Thus the corresponding contribution to the vertex gives:
$$\begin{array}{l}
\prod\limits_{\Box\in \lb_1}\prod\limits_{j=1}^{r_2}\prod\limits_{i=0}^{d(\Box)-1}\,
 \frac{\Big(\frac{a a_j }{\varphi_{\lb_1}(\Box) q^i  t_1 t_2 q }\Big)^{1/2}- \Big(\frac{a a_j }{\varphi_{\lb_1}(\Box) q^i  t_1 t_2 q }\Big)^{-1/2}}{\Big(\frac{\varphi_{\lb_1}(\Box) q^{i}}{a a_j}\Big)^{1/2} -\Big(\frac{\varphi_{\lb_1}(\Box) q^{i}}{a a_j}\Big)^{-1/2} }
\\
\\
\stackrel{a\to 0}{\longrightarrow}\prod\limits_{\Box\in \lb_1}\prod\limits_{j=1}^{r_2}\prod\limits_{i=0}^{d(\Box)-1}\,
\Big(-t_1^{1/2} t_2^{1/2} q^{1/2}\Big)=(-(\hbar q)^{1/2})^{r_2 |\dd_1|}
\end{array}
$$
Same calculation for $i=2, j=1$  gives $(-(\hbar q)^{1/2})^{-r_1 |\dd_2|}$.
The terms $\tb(\lb_1,\dd_1) \tw_{1}$ and $\tb(\lb_2,\dd_2) \tw_{2}$ do not depend on $a$.  We conclude, that the contribution
of $\tb(\lb,\dd) \tw$ to the vertex in the limit $a\to 0$ factors to the contributions of  $\tb(\lb_1,\dd_1) \tw_{1}$ and  $\tb(\lb_2,\dd_2) \tw_{2}$ shifted by some powers of
$-(\hbar q)^{1/2}$. Finally, we have:
$$
\lim\limits_{a\to 0} \varphi_{\lb}(\Box)=\left\{\begin{array}{ll} \varphi_{\lb}(\Box) & \textrm{if} \ \  \Box \in \lb_{1} \\ 0 & \textrm{if} \ \  \Box \in \lb_{2}\end{array}\right.
$$
which for a symmetric polynomial $\tau$ means $\lim\limits_{a\to 0} \tau(\lb,\dd )=\tau(\lb_1,\dd_1)$
Overall, we obtain:
$$
\begin{array}{l}
\lim\limits_{a\to 0} V^{(r),(\tau)}_{\lb}(z)= \\
\\
\sum\limits_{\dd_1, \dd_2}  \hat{a}\Big( T^{vir}_{(\lb_1,\dd_1)} \qm^{d_1}_{p_2}(n_1,r_1) \Big)\hat{a}\Big( T^{vir}_{(\lb_2,\dd_2)}  \qm^{d_2}_{p_2}(n_2,r_2) \Big)\\
\\ \times
(-(\hbar q)^{\frac{1}{2}})^{r_2 |\dd_1|}(-(\hbar q)^{\frac{1}{2}})^{-r_1 |\dd_2|} z^{|\dd_1| + |\dd_2|} \tau(\lb_1,\dd_1) (-1)^{|d| r} q^{-\frac{|d| r}{2}} \\
\\
=  V_{\lb_1}^{(r_1),(\tau)}\Big(z \hbar^{\frac{r_2}{2}}\Big)V^{(r_2),(1)}_{\lb_2}\Big( z \hbar^{-\frac{r_1}{2}} q^{-r_1} \Big)
\end{array}
$$
$\Box$

\bibliographystyle{abbrv}

\bibliography{bib}

\newpage

\noindent
Andrey Smirnov\\
Department of Mathematics\\
University of California, Berkeley\\
Berkeley, CA 94720-3840, U.S.A\\
smirnov@berkeley.edu

\end{document}